\documentclass{amsart}
\usepackage[left=1in,right=1in,top=1in,bottom=1in]{geometry}
\usepackage{comment}
\usepackage{amsmath, amsthm, amssymb,graphicx,float,mathtools,verbatim, subcaption, color}
\usepackage[toc,page]{appendix}

\usepackage{epsfig,cancel,amsmath,amsthm,amssymb,wrapfig,subcaption, makecell, mathrsfs, hyperref}
\usepackage{bbm,bm}
\usepackage{graphicx, xcolor}
\newcommand{\J}{\mathcal{J}}

\newtheorem{theorem}{Theorem}[section]

\newtheorem{lemma}[theorem]{Lemma}
\newtheorem{remark}[theorem]{Remark}

\newtheorem{proposition}[theorem]{Proposition} 
 
\newtheorem{definition}[theorem]{Definition}

\newcommand{\e}{\text{exp}}

\newcommand{\la}{\langle}
\newcommand{\ra}{\rangle}
\newcommand{\pd}{\partial}

\newcommand{\ud}{\,\mathrm{d}}
\newcommand{\I}{\mathbb{T}}
\newcommand{\8}{\infty}

\newcommand{\noin}{\noindent}
\newcommand{\beq}{\begin{equation}}
\newcommand{\eeq}{\end{equation}}

\newcommand{\Z}{\mathbb Z}

\def\beqs#1\eeqs{%
    \begin{equation}\begin{split}%
    #1%
    \end{split}\end{equation}%
}

\title[PDE for Crystal Surface Models]{Analysis of a fourth order exponential PDE arising from a crystal surface jump process with Metropolis-type transition rates}
\author[Y. Gao]{Yuan Gao}
\address{Department of Mathematics, Duke University, Durham, NC 27708}
\email{yuangao@math.duke.edu}

\author[A.E. Katsevich]{Anya E. Katsevich}
\address{Courant Institute of Mathematical Sciences, New York University\\
251 Mercer Street, New York, NY 10012, USA}
\email{katsevich@cims.nyu.edu}

\author[J.-G. Liu]{Jian-Guo Liu}
\address{Department of Mathematics and Department of Physics, Duke University, Durham, NC 27708}
\email{jliu@math.duke.edu}

\author[J. Lu]{Jianfeng Lu}
\address{Department of Mathematics, Department of Physics, and Department of Chemistry, Duke University, Durham, NC 27708 }
\email{jianfeng@math.duke.edu}

\author[J.L. Marzuola]{Jeremy L. Marzuola}
\address{Department of Mathematics, UNC-Chapel Hill \\ CB\#3250
  Phillips Hall \\ Chapel Hill, NC 27599}
\email{marzuola@math.unc.edu}

\begin{document}    

\begin{abstract}

We analytically and numerically study a fourth order PDE modelling rough crystal surface diffusion on the macroscopic level. We discuss existence of solutions globally in time and long time dynamics for the PDE model. The PDE, originally derived by the second author, is the continuum limit of a microscopic model of the surface dynamics, given by a Markov jump process with Metropolis type transition rates. We outline the convergence argument, which depends on a simplifying assumption on the local equilibrium measure that is valid in the high temperature regime. We provide numerical evidence for the convergence of the microscopic model to the PDE in this regime.



\end{abstract}
   
\maketitle 


\section{Introduction}
In this paper, we analyze a fourth order exponential PDE which models the macroscopic dynamics of crystal surface relaxation. The PDE arises as the time and space scaling limit of a microscopic Markov jump process, which evolves via discrete surface hopping events. These events occur at specified transition rates. The transition rates, along with the form of the interaction potential, determine the microscopic dynamics and therefore also shape the macroscopic PDE. (The choice of interaction potential affects the Hamiltonian, through which the equilibrium probability distribution is defined). Here we focus on a quadratic interaction potential and transition rates of Metropolis type, meaning that they are a function only of the difference in energy between the pre- and post-jump crystal states.  Below we will restrict ourselves to one spatial dimension with periodic boundary conditions for simplicity of exposition, but higher dimensional generalizations and other types of boundary condition are indeed possible and treatable using the presented techniques.

In the continuum limit, the crystal surface is represented as a height profile $h(t,x), t\geq0, x\in\mathbb T,$ where $\mathbb T$ is the unit interval with periodic boundary conditions. The resulting PDE limit is of the form
\begin{equation}\label{anyaexpde}
\partial_t h(t,x) = \partial_x \left(\e\left[-\beta\partial_x^3 h(t, x)\right] - \e\left[\beta\partial_x^3 h(t,x)\right]\right), \ \ h(0,x) = h_0 (x),
\end{equation} where $\beta$ denotes inverse temperature. Conditions on the initial data required for the existence theory will be discussed below. In this paper, we develop analytic tools for the PDE \eqref{anyaexpde}. In particular, we prove the existence of global solutions to \eqref{anyaexpde} in Section \ref{sec:longtime}.  We also explore properties of \eqref{anyaexpde} numerically in Section \ref{sec:numerics}.

This PDE has several key features that distinguish it from analogous exponential PDEs derived as the macroscopic limit of Arrhenius rate dynamics in \cite{mw-krug, krug1995adatom}, and recently revisited in \cite{gao2019analysis}. The Arrhenius rates are another set of rates for microscopic dynamics which are used in the Solid-On-Solid (SOS) model, a well-known (and well-studied) model for crystal surface relaxation \cite{Binh}. 
The Arrhenius rate PDE in \cite{krug1995adatom} assumes an absolute value potential, while the PDE in \cite{mw-krug} is derived using a more general interaction potential, including in particular the quadratic one. For the quadratic interaction potential, it is given by an equation of the form (taking $\beta=1$ for simplicity)
\begin{equation}
\label{Hm1expde}
\partial_t h = \partial_{xx}\, \e\left(- \partial_{xx} h\right), \ \ h(0,x) = h_0(x).
\end{equation}
    This PDE can be interpreted as a weighted $H^{-1}$ gradient flow in a similar fashion to the framework laid out in the related works \cite{liu2017asymmetry,craig2020proximal}.
Recent analytic progress has been made relating to existence of weak solutions, characterization of dynamics, construction of strong solutions, and classification of the breakdown of regularity for this equation. See for instance the works \cite{liu2016existence,liu2017analytical,liu2017asymmetry,gao2017gradient,xu2017existence}. 


One symmetry property of \eqref{anyaexpde} not present in \eqref{Hm1expde} demonstrates an important intrinsic distinction between the microscopic Metropolis and Arrhenius rate dynamics. Namely, if $h$ is a solution to \eqref{anyaexpde} with initial profile $h_0$, then $-h$ is another solution with corresponding initial data $-h_0$. This is not the case for \eqref{Hm1expde}. In fact, solutions to \eqref{Hm1expde} form singularities in convex regions but not in concave regions \cite{mw-krug}. Using the structure of the Metropolis and Arrhenius rates, one can show this same symmetry property (or lack thereof) holds on a microscopic level. 

We note that the exponential PDEs \eqref{anyaexpde} and \eqref{Hm1expde} arise by applying a nonstandard but potentially more informative scaling regime to the microscopic dynamics (i.e. relative scaling of time, space, and height in the large crystal limit). In a more standard scaling regime, the exponentials become linearized. 

In~\cite{mw-krug}, the limiting macroscopic PDEs in both scaling regimes are derived using a probabilistic framework. These PDEs can also be derived using physical arguments. In \cite{krug1995adatom}, for example, the authors use physical principles to derive a PDE for the Arrhenius rate broken bond model with an absolute value potential. The resulting PDE does not have exponential dependence on the derivatives of $h$, and corresponds to the standard scaling regime. A separate argument in the last section of \cite{krug1995adatom} suggests an exponential PDE similar to \eqref{Hm1expde} as an alternative. In \cite{krug1995adatom} as well as in  \cite{gao2019analysis}, the exponential PDE is obtained by applying the so-called Gibbs-Thomson relation as an underlying approximation for how the density varies with respect to the chemical potential; see \cite{liu2017asymmetry}, Section $1.1$ for details on this approach.

The derivation of~\eqref{anyaexpde} uses the same probabilistic framework as~\cite{mw-krug}. The argument is exactly the same, except of course that one should replace the Arrhenius rates with the Metropolis rates. For this reason, and because the analysis of exponential PDE \eqref{anyaexpde} is the primary focus of this paper, we only give a high level overview of the argument. The argument relies on the assumption that in local equilibrium, the process's distribution is given by a local Gibbs measure. This assumption is correct for the Arrhenius rates but is not entirely accurate for the Metropolis rates due to their jump asymmetry. As a consequence, the PDE~\eqref{anyaexpde} is not necessarily accurate in general; see the end of Section~\ref{numerics} for further discussion.  Nevertheless, the equation is an excellent fit for the microscopic dynamics when $\beta$ is small. We add that the equation retains its nonlinearity for small $\beta$ (i.e. one cannot linearize the exponentials), as evidenced by the evolution of numerical solutions to~\eqref{anyaexpde} shown in Section~\ref{numerics}.

The paper will proceed as follows.  In Section \ref{sec:micsp}, we describe the microscopic process and scaling regime leading to the PDE~\ref{anyaexpde}. We then compare the evolution of the PDE solution to that of the microscopic process. The section concludes with a discussion of the slight discrepancy between the PDE dynamics and the true large crystal microscopic dynamics. In Section \ref{sec:longtime}, we prove the global existence and long time behavior of the solutions to  PDE \eqref{Hm1expde}.  In Section \ref{sec:numerics}, we explore properties of the PDE numerically. In Appendix \ref{rate-expect} we compute the Metropolis rate expectation with respect to the local Gibbs measure. This is the main computation needed to derive the PDE from the microscopic dynamics. In Appendix \ref{A:LTalt}, we give an alternative approach to the analysis of long time dynamics for \eqref{anyaexpde}.

\subsection*{Acknowledgements} This project was started while JLM was
on sabbatical at Duke University in the Spring of 2019.  JLM thanks
Bob Kohn, Dio Margetis and Jonathan Weare for many
valuable conversations regarding modeling of kinetic Monte Carlo. AEK is supported by the DOE Computational Science Graduate Fellowship. JGL
was supported by the National Science Foundation (NSF) grant
DMS-1812573 and the NSF grant RNMS-1107444 (KI-Net).  JL was supported
by the National Science Foundation via grant DMS-1454939.  JLM
acknowledges support from the NSF through NSF CAREER Grant
DMS-1352353 and NSF grant DMS-1909035. 

\section{Motivation: PDE as Scaling Limit}
\label{sec:micsp}
The PDE~\eqref{anyaexpde} arises as the continuum limit of a discrete microscopic Markov jump process modeling the relaxation of a crystal surface. We briefly describe the state space and dynamics of this microscopic process. 
The process is represented by a height profile $$h_N(t)=(h_N^0(t),\dots, h_N^{N-1}(t))\in\Z^N, t\geq 0.$$ For fixed $t$, we may think of $h_N(t)$ as a step function in space on the torus $\mathbb T=[0,1)$ (with endpoints identified), which takes the value $h_N^j(t)$ on $[j/N, (j+1)/N)$. We call the interval $[j/N, (j+1)/N)$ ``site $j$''. The integer values $h_N^j$ represent the number of particles at site $j$ stacked in a column, above or below a fixed level surface representing zero height. 

The process evolves by means of particles jumping between neighboring sites. Each such jump occurs instantaneously at a certain transition or jump rate. The jumps and their corresponding jump rates fully determine the dynamics and must be specified in advance. 

We denote the event  in which a particle jumps from site $i$ to site $j$ (with $|i-j|=1$) by $h\mapsto J_i^jh$. That is, if the profile was given by $h_N(t)=h$ before the jump, then after the jump it is given by $J_i^jh$, where
\beqs
&(J_i^jh)^i= h^i-1,\quad (J_i^jh)^j= h^j+1,\\
&(J_i^jh)^k= h^k, \quad k\neq i,j.
\eeqs 

The rate $r^{i,j}(h)$ at which this transition occurs is defined through 
$$\mathbb P\bigg(h_N(t+\delta)  = J_i^jh\;\bigg\vert\; h_N(t)=h\bigg) =  r^{i,j}(h)\delta + o(\delta).$$ In particular, we assume $r^{i,j}(h)$ is independent of time. The jump rates determine the expected instantaneous change in height induced by a jump. Thus while a jump from site $i$ to $j$ increases $h_N^j$ by $1$, the expected increase in $h_N^j$ is given by $r^{i,j}(h_N(t))dt$.  

Let us write down the evolution of $h_N^i$ at a given moment in time. It will decrease if a jump occurs from $i$ to $i+1$ or from $i$ to $i-1$, and increase if a jump occurs from $i+1$ or $i-1$ to $i$. We therefore obtain
$$dh_N^i(t)= \bigg[(r^{i-1,i}-r^{i,i-1})(h_N(t)) - (r^{i,i+1}-r^{i+1,i})(h_N(t))\bigg]dt + d\xi_N^i(t),$$ where $\xi_N^i$ represents random fluctuations. 

If we define $\J_N^i = r^{i,i+1}-r^{i+1,i}$ as the ``current'' from $i$ to $i+1$, then we can write the above as a microscopic conservation law:
$$dh_N^i(t)= -(\J_N^i-\J_N^{i-1})(h_N(t))dt + d\xi_N^i(t).$$

By scaling time and height with $N$, the random fluctuations vanish and we obtain the deterministic limit $$h(t,x) = \lim_{N\to\infty}N^{-3}h_N^{Nx}(N^4t),$$ with 
\beq\label{J-eqn}\partial_th = -\partial_x\J^x(h).\eeq Here $\J^x(h)= \lim_{N\to\infty}\langle \J_N^{Nx}(h_N)\rangle,$ where $\langle\cdot\rangle$ denotes expectation with respect to the local Gibbs measure (see the Appendix for a definition of the measure). 

We note that the above argument is only heuristic. In particular, we do not necessarily expect pointwise convergence of the $h_N^i$ to $h(t,x)$. One can only expect the convergence of averages of $h_N^i$ in mesoscopic-sized intervals to the macroscopic profile. 

Note that the equation~\eqref{anyaexpde} is of the form~\eqref{J-eqn}, with
$$\J^x(h) = e^{-\frac32\beta}\sinh(\beta h_{xxx}(t,x)).$$ It is the scaling limit of the microscopic process under a certain choice of transition rates, which we now describe. Namely, we set
\beq\label{metrop-def} r^{i,j}(h) = \e\left(-\frac{\beta}{2}\bigg[H(J_i^jh)-H(h)\bigg]\right),\quad |i-j|=1,\eeq where $\beta$ is an inverse temperature and $H(h)$ is the Hamiltonian, i.e. surface energy, of a configuration $h$, defined by
$$H(h) = \sum_{i=0}^{N-1}(h^{i+1}-h^i)^2.$$ Note that the energy is independent of how the zero height level surface is chosen, i.e. a uniform shift $h^i\mapsto h^i+c,i=0,\dots,N-1$ does not affect the energy. 

These rates give preference to atomistic motion that lowers the surface energy $H$. Importantly, the dynamics is not symmetric with respect to jumping left or right, instead favoring the direction yielding lower energy.

The rates are in detailed balance with respect to the Gibbs distribution $$p_\infty(h)\propto e^{-\beta H(h)},$$ so that the Gibbs distribution is the global equilibrium ($t\to\infty$) distribution of the process. These rates belong to a class of rates of what we call  ``Metropolis type'', in that they are in detailed balance with the Gibbs measure and depend only on the energy difference $\Delta H$ between the pre- and post-jump states. Indeed, any rates of the form $\phi(\Delta H)$ which satisfy
$$\phi(-\Delta H) = \phi(\Delta H)e^{\beta\Delta H}$$ are in detailed balance with the Gibbs measure. The more well-known set of rates of Metropolis type are $\phi(\Delta H) = e^{-\beta\Delta H}\wedge1$. We note that while this is also the acceptance probability in standard Metropolis-Hastings algorithms, the goal of that algorithm is to sample the invariant Gibbs measure, whereas we are interested in the Markov process dynamics itself.  We choose rates of the form $e^{-\beta\Delta H/2}$ due to their analytic tractability, since it is more straightforward to compute the rate expectations $\langle r^{i,j}(h)\rangle$ for these rates than for the Metropolis rates involving a minimum.  

See Appendix~\ref{rate-expect} for the computation of $\langle r^{i,i+1}-r^{i+1,i}\rangle$, from which the PDE follows. 
\subsection{Scaling Limit: Numerics}\label{numerics}
In Figures~\ref{two-bump} and~\ref{sine-met}, we numerically compare the microscopic evolution with that of the solution to the PDE~\eqref{anyaexpde}. 

To solve the PDE, we discretized spatially using centered difference schemes and applied a numerical ODE timestepper designed for stiff ODEs.  To simulate the microscopic process, we used the Kinetic Monte Carlo (KMC) method. In this method, one iteratively updates the state of the process, $\bf h\mapsto  \bf h'$, and the physical time, $t\mapsto t + \Delta$ until the desired final time is reached.  The new state ${\bf h}'$ is randomly chosen with probability proportional to the transition rate $r\left({\bf h}\mapsto  {\bf h}'\right)$, and $\Delta$, which represents the amount of time the process spent in state ${\bf h}$, is chosen from an exponential distribution $\mathrm{Exp}(\lambda)$, with $\lambda=\sum_{{\bf h}'}r(\bf h\mapsto  \bf h').$ 

To compare the macroscopic and rescaled microscopic processes, we fix an initial non-trivial (out of equilibrium) macroscopic profile $h(x,0) = h_0(x)$ for which $||h||_{\infty} > 0$ in order to ensure non-trivial dynamics in both the microscopic and macroscopic flows. We evolve the PDE forward from $h_0$ to some macroscopic times of interest $T$. Then, for various $N$, we run KMC from the initial microscopic profile ${\bf h}_N(0) = \left(N^{3}h_0(j/N)\right)_{j=0}^{N-1}$ up to the microscopic time $N^4T$. We should then expect to see that $h(j/N, T) \approx N^{-3}h^j_N(N^4T)$ for large $N$. 

In addition to comparing the evolution of the KMC and PDE height profiles, we also check whether the Metropolis rate expectation computed using the local Gibbs measure converges to the true rate expectation. If this is the case, then one should have
\beq\label{numerical_rate_expect}\langle r^{i,i+1}(h_N(t))\rangle \approx \frac{1}{2N^4\delta}\int_{N^4(T-\delta)}^{N^4(T+\delta)}r^{i,i+1}(h_N(s))ds,\eeq
where the left hand side is the rate expectation with respect to the local Gibbs measure; see Appendix~\ref{rate-expect} for this expression. The expectation depends on the macroscopic profile $h(t,x)$, which we estimate using the PDE solution. The right hand side is the integral of a step function, since the microscopic process is a Markov jump process. It can therefore be simply computed from the KMC simulation by keeping track of the rate values and time between jumps. 

We consider two initial profiles and values of $\beta$: in Figure~\ref{two-bump}, we take $h_0(x) = g(x) + g(x+0.2 \mod 1)$ with $\beta = 0.01$, where $$g(x) = \frac{1}{10}\text{exp}\left(8 - 1/x -1/(\frac12 - x)\right)\mathbbm{1}_{(0,\frac12)}(x)$$ is a smooth bump function supported on $(0,\frac12)$. In Figure~\ref{sine-met}, $h_0(x) = \frac{1}{10}\sin(2\pi x)$, with $\beta = 0.25$. The reason for choosing small amplitude for the initial profile and small $\beta$ is to limit how large the rates can be, since they depend exponentially on $\beta$ and the curvature of ${\bf h}_N$. 

\begin{figure}
\includegraphics[width = 0.495\linewidth]{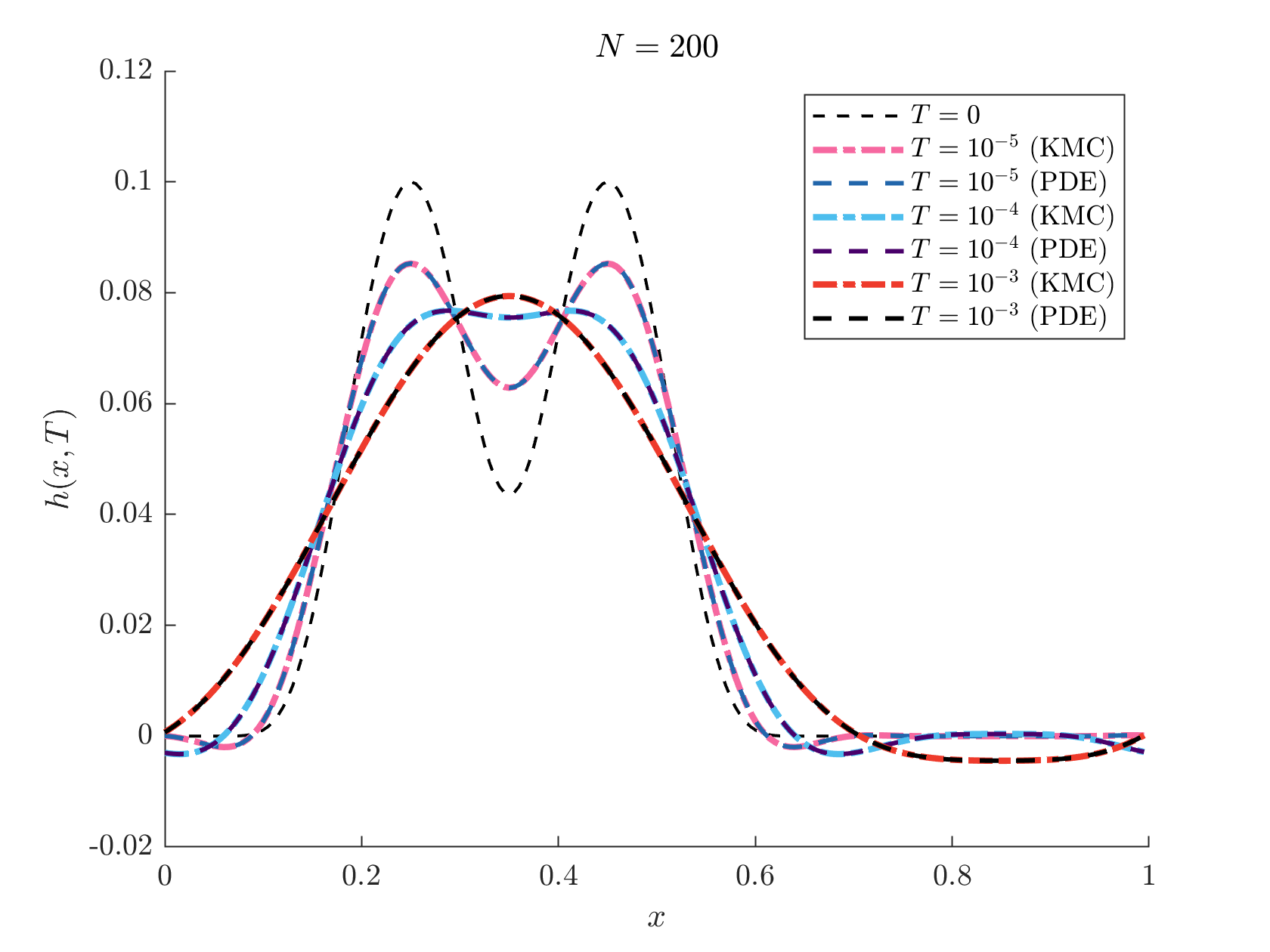}
\includegraphics[width = 0.495\linewidth]{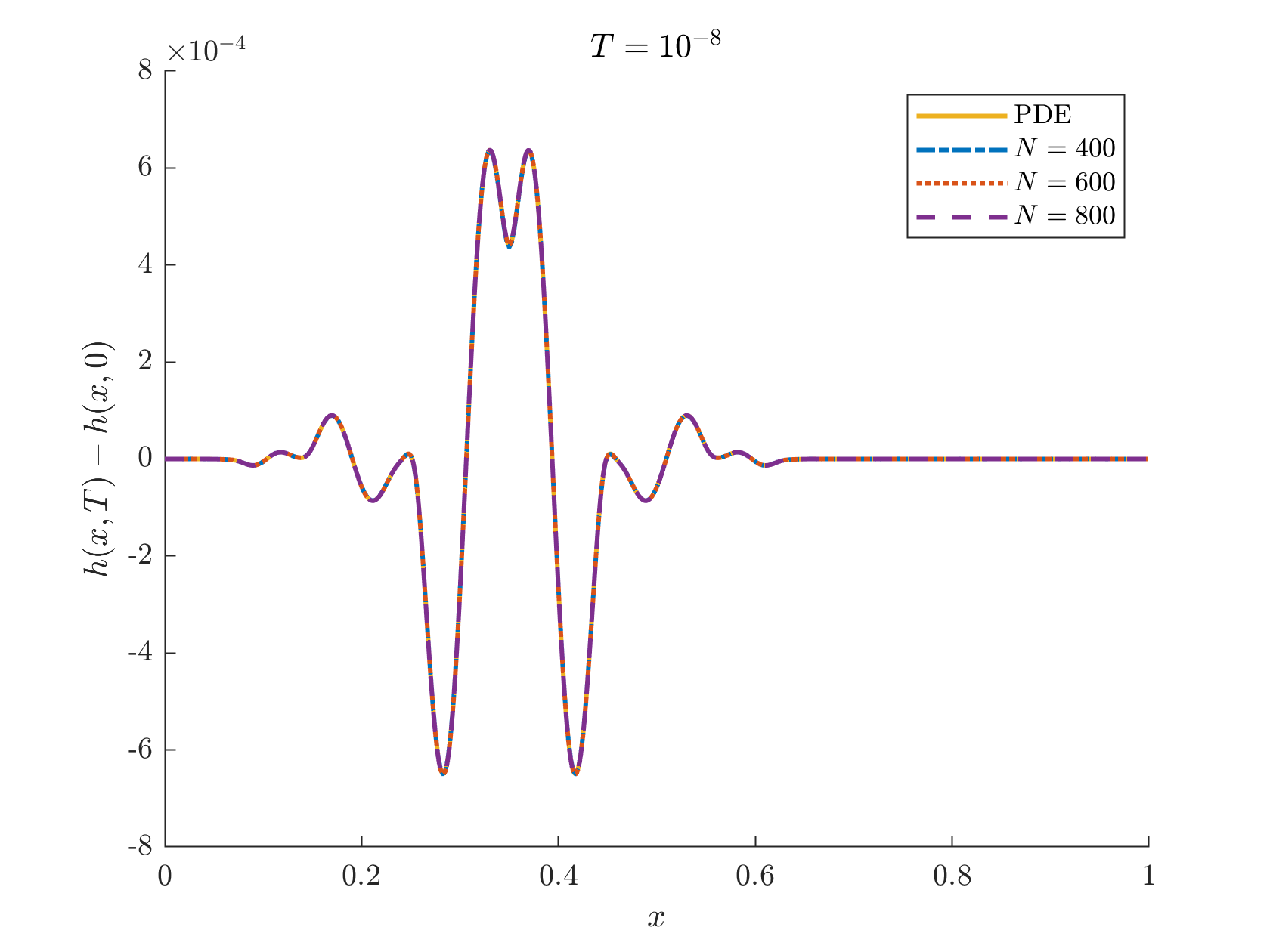}
\includegraphics[width = 0.495\linewidth]{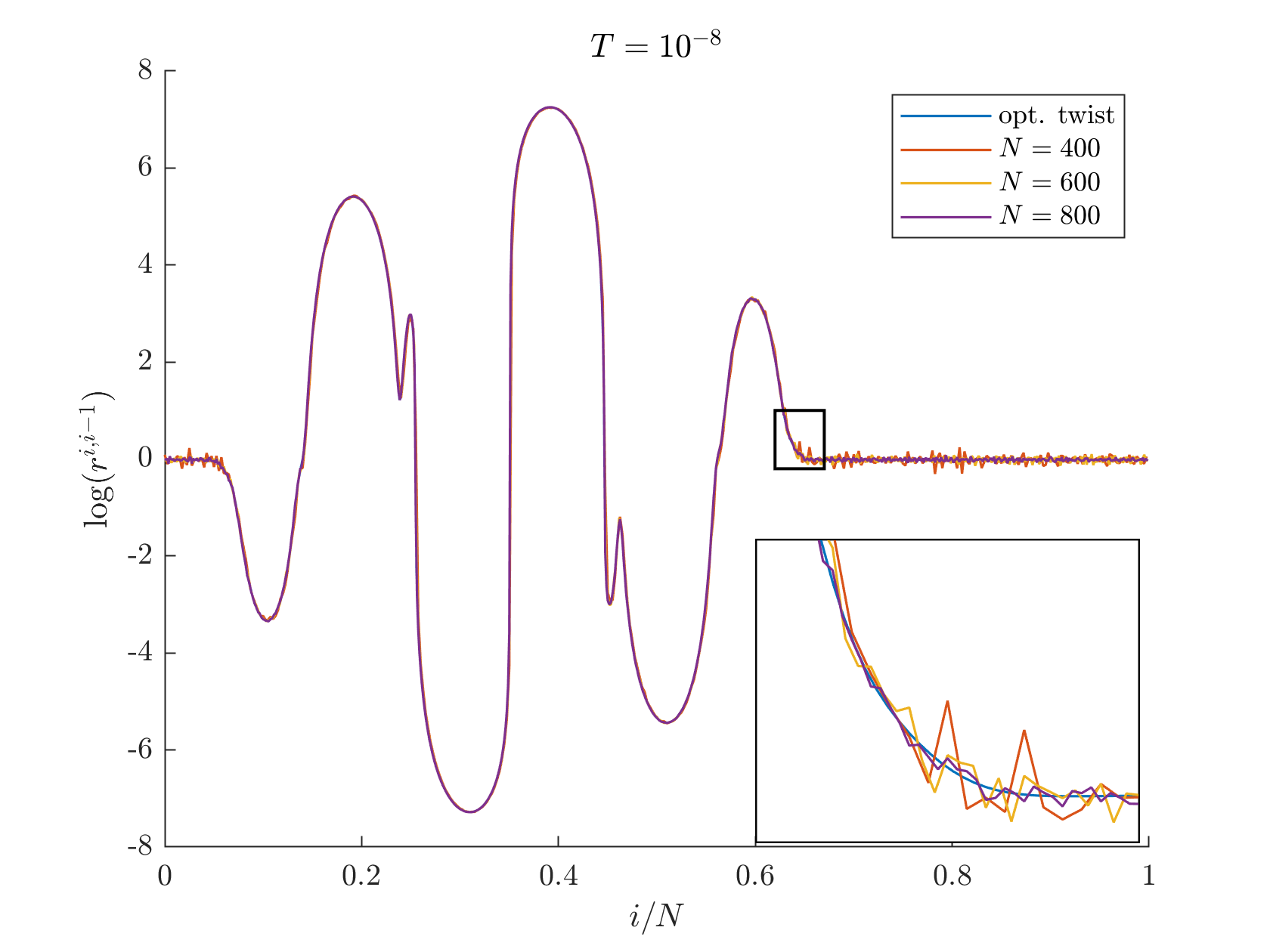}
\includegraphics[width = 0.495\linewidth]{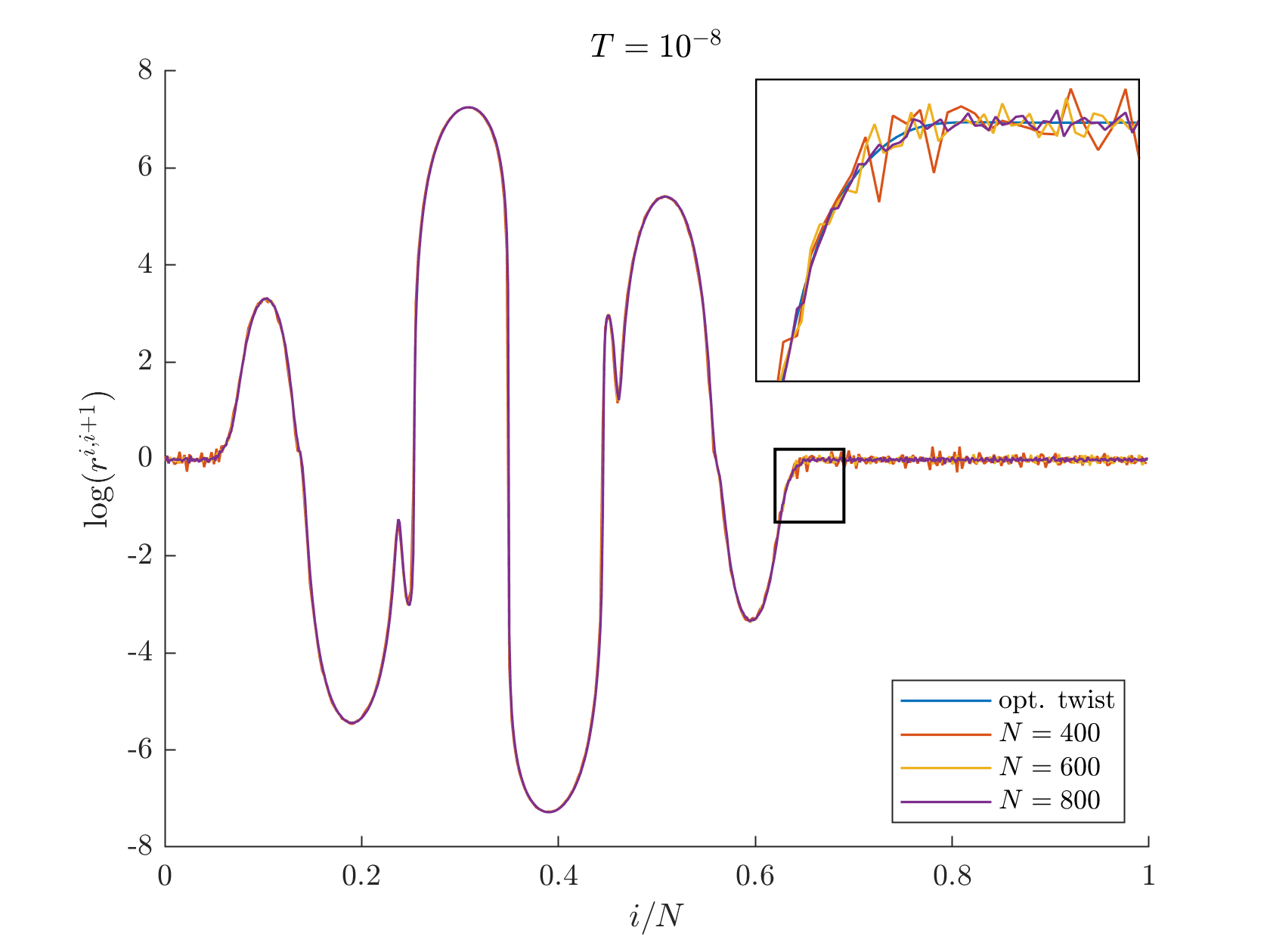}
\caption{Metropolis rate dynamics with temperature $\beta = 0.01$, from initial profile $h(x,0)$ given by a sum of a bump function $g(x)$ and its  translation, $g(x+0.2 \mod 1)$. Top left: PDE vs KMC evolution with fixed $N$ at various times $T$. Top right: PDE vs KMC short time evolution, $h(x,T) - h(x,0)$, for fixed $T$ and various $N$. Bottom left (bottom right): log of local Gibbs average vs log of empirical average of rates to jump left (jump right) for fixed $T$ and various $N$.}
\label{two-bump}
\end{figure}

Figure \ref{two-bump} shows results from the experiment with a compactly supported two-bump initial profile, and $\beta = 0.01$. We observe an excellent fit of the PDE solution to the microscopic profile obtained from KMC. The top left figure shows that on the scale of the initial height amplitude, the PDE dynamics exactly coincides with the microscopic dynamics for $N=200$. The top right figure shows that the PDE also fits the microscopic dynamics on the scale of small shifts in amplitude. Moreover, the microscopic dynamics (after rescaling) has already converged for $N = 400$, since increasing $N$ does not affect the dynamics. 

The bottom left (bottom right) figure compares the time average of the right-jump rates (left-jump rates) with its expectation with respect to the local Gibbs measure. The zoomed-in part of the plot shows that the KMC rate time average oscillates more closely around the local Gibbs rate expectation as $N$ increases.  

\begin{figure}
\includegraphics[width = 0.495\linewidth]{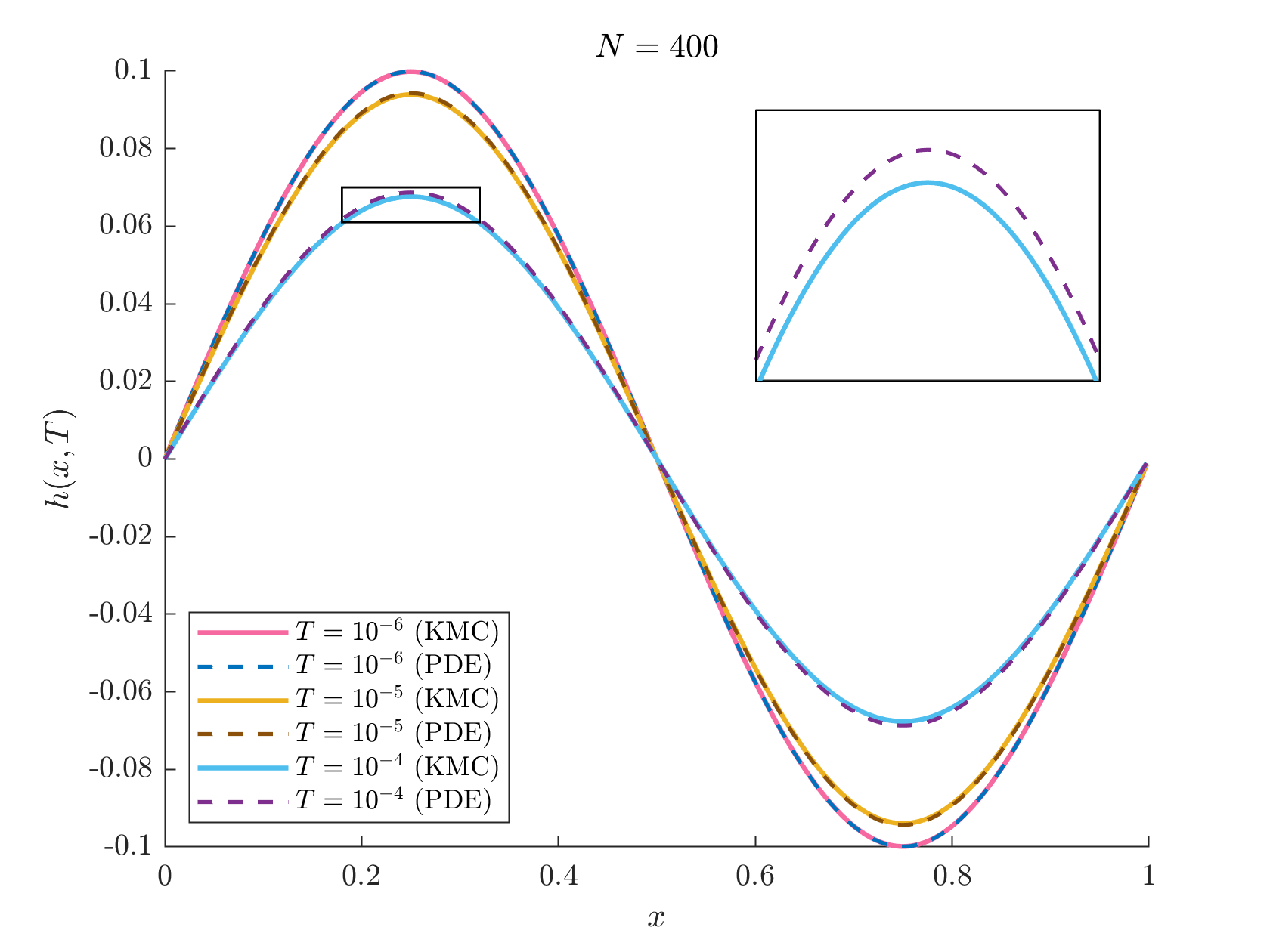}
\includegraphics[width = 0.495\linewidth]{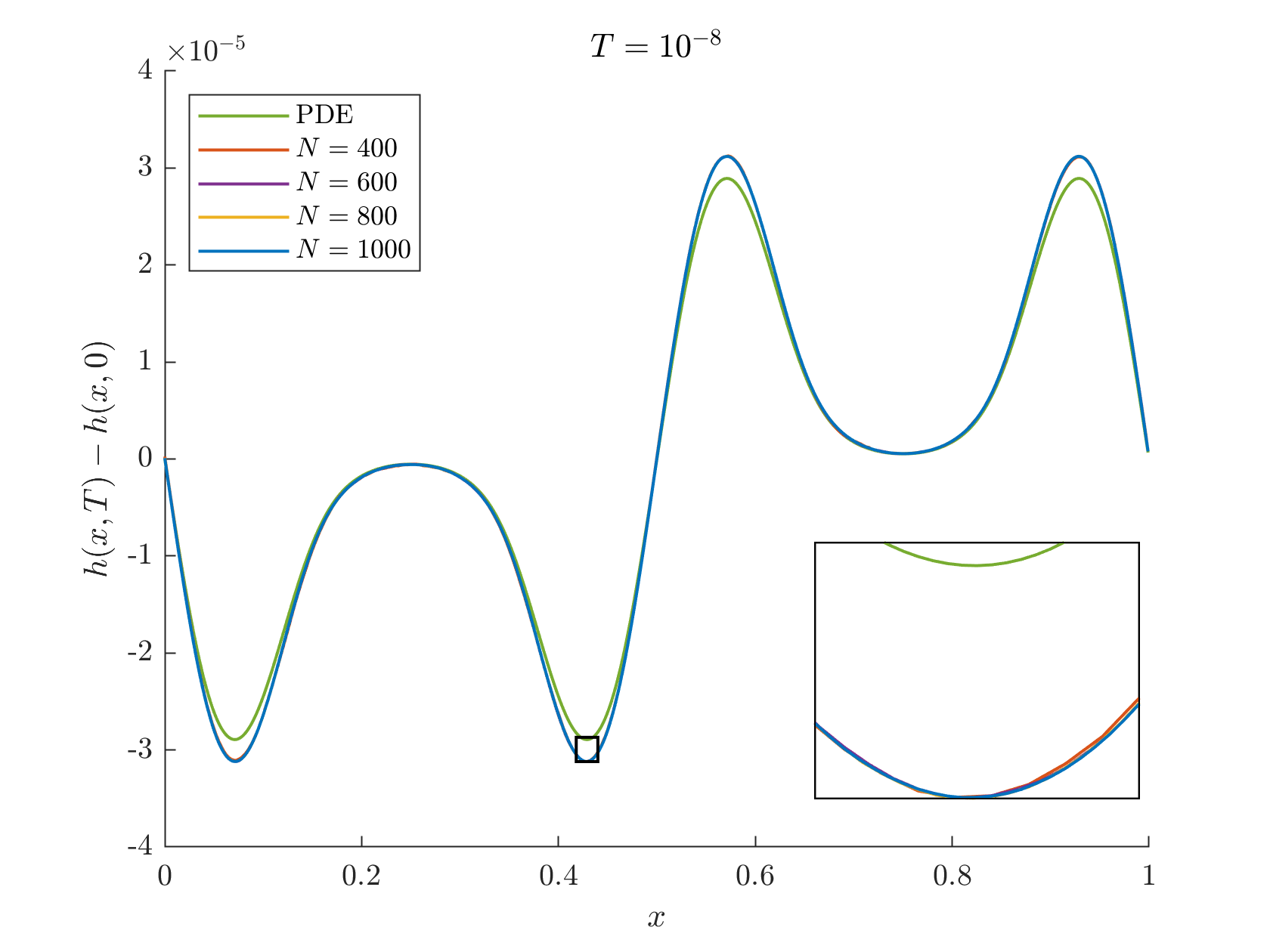}
\includegraphics[width = 0.495\linewidth]{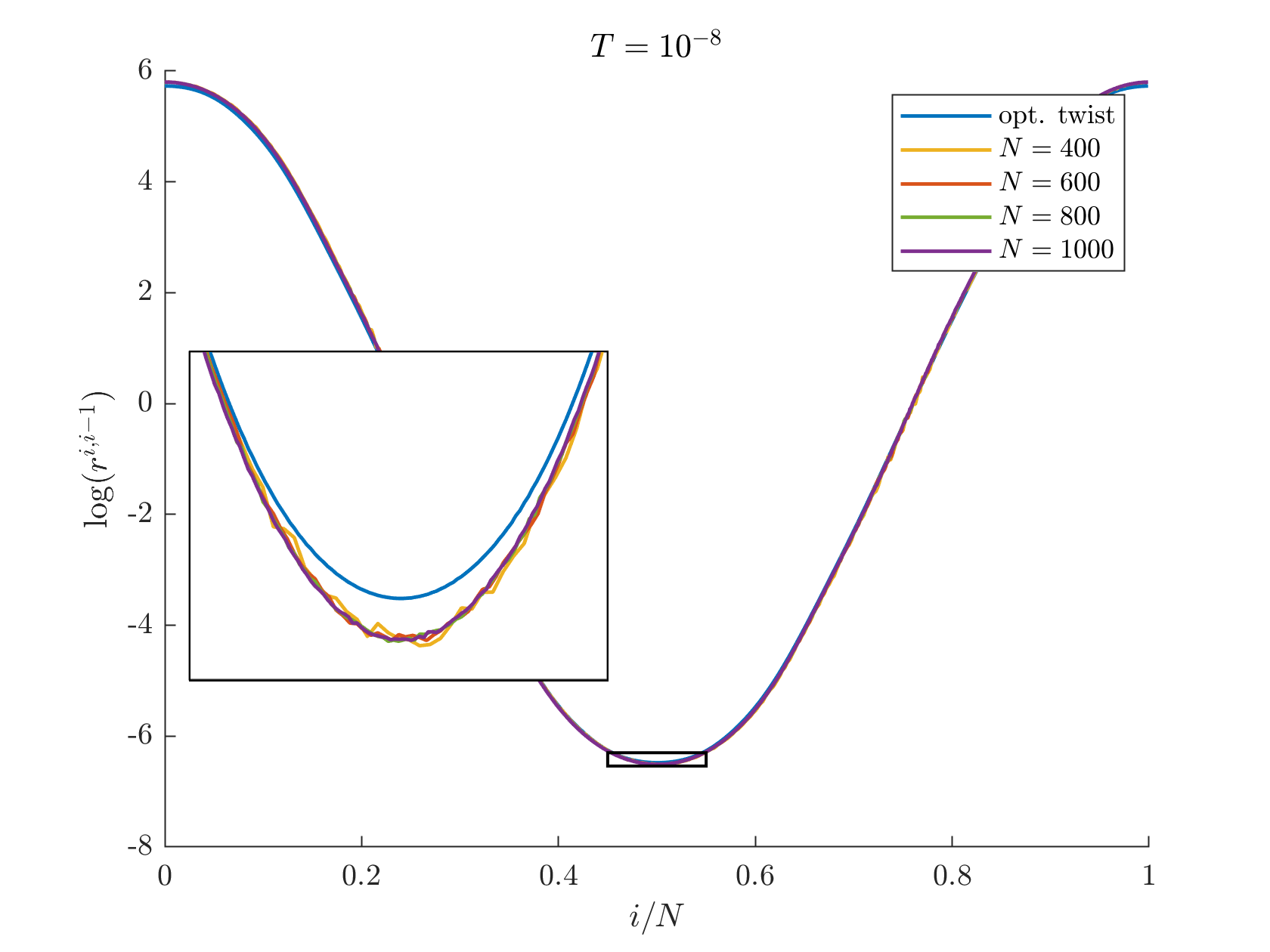}
\includegraphics[width = 0.495\linewidth]{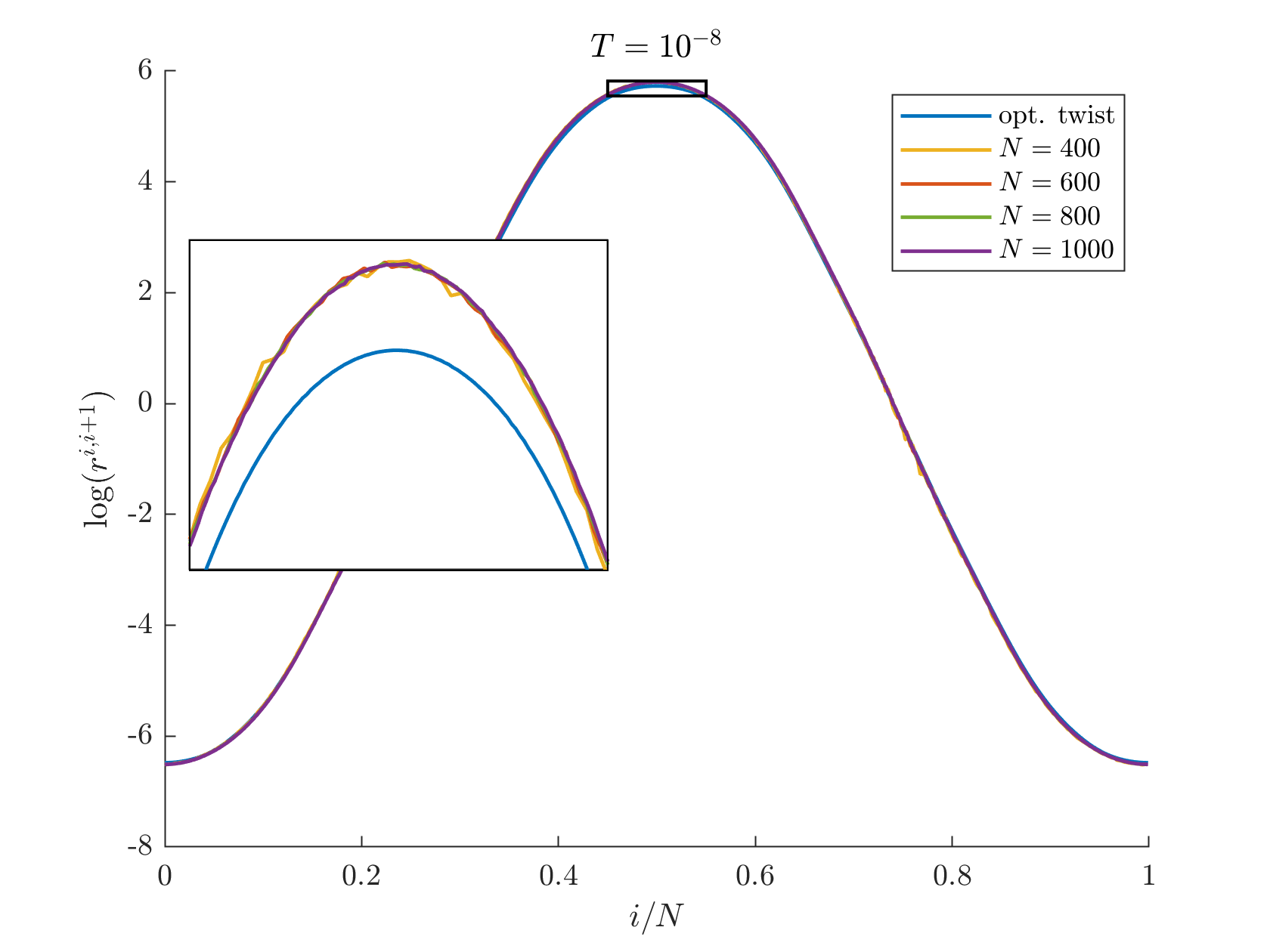}
\caption{Metropolis rate dynamics with temperature $\beta = 0.25$, from initial profile $h(x,0) = 0.1\sin(2\pi x)$. Top left: PDE vs KMC evolution with fixed $N$ at various times $T$. Top right: PDE vs KMC short time evolution, $h(x,T) - h(x,0)$, for fixed $T$ and various $N$. Bottom left (bottom right): optimal twist average vs empirical average of rates to jump left (jump right) for fixed $T$ and various $N$.}
\label{sine-met}
\end{figure}

Figure \ref{sine-met} shows results from the experiment with a sinusoidal initial profile and $\beta = 0.25$. Qualitatively, the PDE fully captures the microscopic dynamics. However, compared to the two-bump initial profile, we observed a slightly poorer fit between the PDE and microscopic dynamics. The top right and bottom figures, in which results from different values of $N$ are presented, show that the microscopic process
has nearly converged. Thus the discrepancy between the PDE and microscopic profiles is not attributable to $N$ being insufficiently large. Instead, it must be related to the assumption of a local Gibbs measure. Indeed, the bottom plots show that the expected value of the rate observable is not given exactly by the expectation with respect to the local Gibbs measure, though of course it is clearly a very close approximation to the actual measure. 

\vspace{0.2cm}
\noin\textbf{Discussion of Discrepancy in Figure~\ref{sine-met}.} The PDE~\eqref{anyaexpde} is derived using the assumption that once the microscopic process equilibrates locally, its probability distribution is well-approximated by a local Gibbs measure. In actuality, this measure requires a correction which involves the sum of the local currents, $\sum_{i=0}^{N-1}\J^i$. This sum (and the correction) vanishes for jump rates which satisfy the gradient condition, under which the current can be written as a discrete spatial derivative of another observable. As an example, symmetric jump rates (i.e. those for which jumping left from site $i$ has the same rate as jumping right) satisfy this condition. Indeed, we have $\J^i = r^{i} - r^{i+1},$ where $r^i := r^{i,i+1} = r^{i,i-1}$.

The correction to the measure affects the expectation of the current. It should manifest in the expectation as a nonconstant multiplicative factor, called a mobility by physicists. For a discussion of this, and for the form of the multiplicative factor, see e.g.~\cite{krug1995adatom, spohn2012large}. As we see in the numerical simulations, the correction has a negligible (albeit nonzero) effect on the dynamics. We expect the PDE~\eqref{anyaexpde} to be an accurate approximation to the microscopic dynamics when $\beta$ is small. Indeed, for $\beta$ small the jump rates are close to constant, so that $\sum_{i=0}^{N-1}\J^i$ is small.

For higher $\beta$ (lower temperature), the PDE requires a nontrivial correction. The form of this correction is investigated in a forthcoming paper. 


\section{Global solution and long time dynamics}
\label{sec:longtime}
In this section, we interpret the 4th order exponential PDE for Metropolis rate dynamics as a gradient flow of a proper convex functional with a $L^2$ dissipation. Then using the minimizing movement and the convergence analysis in \cite{AGS},  we  construct a global strong solution and prove there is no singularity formation.  In Appendix \ref{A:LTalt}, we present ideas for another approach using the bi-variational structures.

After setting most physical constants to be $1$, we obtain the continuous equation for surface growth with Metropolis-type rates 
\begin{equation}
\pd_t u = \frac{1}{2}\pd_x[ e^{-\pd_x^3 u}- e^{\pd_x^3 u} ].
\end{equation}
Denote $h:= \pd_x u$. We obtain formally the equation for $h$
\begin{equation}\label{PDE}
\pd_t h = \frac{1}{2}\pd_{xx}[e^{-\pd_{xx}h}-e^{\pd_{xx}h}]= -\pd_{xx}(\sinh (\pd_{xx}h)).
\end{equation}
We will validate this equation by  proving the global existence and the long time behavior of solutions to \eqref{PDE} with periodic boundary condition; see Theorem \ref{cor10}.

\subsection{Gradient flow in $L^2(\I)$}\label{sec1.2}
Let us first define formally a convex functional with some formal observations and recast \eqref{PDE} into a $L^2(\I)$ gradient flow. Let $\phi$ be
\begin{equation}
  \phi(h):= \int_\I \cosh(h_{xx}) \ud x.
\end{equation}
The first variation of $\phi$ is
$$\frac{\delta \phi}{\delta h}=\pd_{xx}(\sinh (h_{xx}))$$
and then formally we have
\begin{equation}\label{gfs}
  h_t=-\frac{\delta\phi}{\delta h}.
\end{equation}

To study the global strong solution to \eqref{PDE}, we plan to apply the gradient flow theory in metric space $L^2(\I)$.   Let us first make some inspiring observations, which will be made rigorous in the proof later.
\\
{\bf Observation 1 (Conservation Laws).} Thanks to the  periodic assumption, we have
\begin{equation}
  \frac{\ud}{\ud t} \int_\I h \ud x = 0,
\end{equation}
which implies $\int_\I h \ud x =\int_\I h_0 \ud x.$ Moreover from $\int_\I h_{xx} \ud x =0,$
we know
\begin{equation}
  \int_\I (h_{xx})^+ \ud x = \int_\I (h_{xx})^- \ud x.
\end{equation}
Here $(h_{xx})^- $ is the negative part of $h_{xx}$ and $(h_{xx})^+ $ is the positive part of $h_{xx}$. 
\\
{\bf Observation 2 (Dissipation Inequalities).}
From the gradient flow structure \eqref{gfs},
\begin{equation}
  \frac{\ud \phi}{\ud t} =\int_\I \frac{\delta\phi}{\delta h} h_t \ud x = -\int_\I \big|\frac{\delta\phi}{\delta h}\big|^2 \ud x =-\int_\I h_t^2 \ud x \leq 0,
\end{equation}
which gives the observation
$$\phi(h(t))\leq \phi(h(0)) \quad \text{ for any }t\geq 0.$$
One shall notice the boundedness of functional 
$\phi$  gives us good estimates to prevent both the positive and negative parts of $h_{xx}$ from becoming singular. 
Indeed, we have uniform estimate
\begin{align*}
  \frac{1}{p}\int_\I (h_{xx}^+)^p \ud x \leq & \int_{\I \cap (h_{xx}^+>0) } e^{(h_{xx})^+} \ud x \leq   \int_{\I \cap (h_{xx}^+>0) } \left( e^{(h_{xx})^+}+ e^{-(h_{xx})^+} \right) \ud x\\
  \leq  &\int_{\I  } \left( e^{h_{xx}}+ e^{-h_{xx}} \right) \ud x = 2\phi(h(t)) \leq 2\phi(h(0)).
\end{align*}
 Similarly, we have the same estimate for the negative  part $h_{xx}^-$.
Thus for any $p\in \mathbb{N}^+$,
\begin{equation}
\frac{1}{p}\int_\I |h_{xx}|^p \ud x \leq 4\phi(h(0)).
\end{equation}
For simplicity we choose the working space
\begin{equation}\label{Hnote}
H:=\left\{u\in L^2(\I); \int_\I u\ud x=0\right\},\quad
  V:= \{ u\in H^2(\I); \int_\I u\ud x=0\}
\end{equation}
with standard $L^2$-norm, denoted as $\|\cdot\|$, and $H^2$-norm, denoted as $\|\cdot\|_{V}$ respectively. Denote the best constant for Poincare's inequality as $\kappa>0$, which depends only on the size of the domain $\I$. Specifically, if the period of the domain is $L$, $\kappa=\left(\frac{L}{2\pi}\right)^2.$ 

\subsection{Variational inequality solution}
Let $\phi$ be a functional
\begin{equation}\label{phi}
 \phi:H\to [0,+\8],\qquad  \phi(h):= 
\begin{cases}
 \int_\I \cosh(h_{xx}) \ud x & \text{ if }h\in V;\\
 +\8 & \text{ otherwise. }
\end{cases} 
\end{equation}

\subsubsection{Euler Scheme}
First let us  establish the gradient flow evolution in the {metric}
space $(H,\mbox{dist})$, with distance $\mbox{dist}(u,v):=\|u-v\|$.
Let $h_0 (x)\in {H}$ be a given initial datum and $0<\tau\ll1$ be a given parameter.
We consider a sequence $\{x_n^{\tau}\}$ which satisfies the following unconditional-stable backward Euler scheme
\begin{equation}\label{E}
\left\{
\begin{array}{l}
x^{(\tau)}_n\in \text{argmin}_{x'\in H} \left\{\phi (x')+\dfrac1{2\tau} \|x'-x^{(\tau)}_{n-1}\|^2 
\right\},  \qquad n\ge 1,\\
x^{(\tau)}_0:= h_0 \in H.
\end{array}
\right.
\end{equation}
The existence and uniqueness of the sequence $\{x_n^{\tau}\}$ can be proved by direct methods in the calculus of variations after establishing the convexity and lower semicontinuity of $\phi$ in Proposition \ref{convex}.
Thus we consider the gradient descent with respect to $\phi$ in the space $(H,\mbox{dist})$.

Now for any $0<\tau\ll1$ we define the resolvent operator, also known as proximal mapping of $\phi$,  (see \cite[p. 40]{AGS})
\begin{equation*}
\mathcal{J}_\tau[h]:=\text{argmin}_{v\in H} \left\{\phi (v)+\dfrac1{2\tau} \|v-h\|^2 \right\},
\end{equation*}
then the variational approximation of $h$ at $t$ is obtained by Euler scheme \eqref{E} as
\begin{equation}\label{Euler10}
h_n(t):=(\mathcal{J}_{t/n})^n[h_0 ].
\end{equation}
In Proposition \ref{EVI}, we will use the theory for gradient flow in metric space \cite[Theorem~4.0.4]{AGS} to establish the convergence of the variational approximation $h_n(t)$ to variational inequality solution to \eqref{PDE}, which is defined below.
\begin{definition}\label{defweak}
Given initial data $h_0 \in H$, we call $h:[0,+\8)\to H$ a variational inequality solution to \eqref{PDE} if $h(t)$ is a locally absolutely continuous curve such that $\lim_{t\to 0} h(t)=h_0 $ in $H$ and
 \begin{equation}
\la h_t(t),h(t)-v\ra_{{ H',H}}\le \phi(v)-\phi(h(t)) \quad \text{for a.e. } t>0,\,\forall v\in { D(\phi)}.
\end{equation}
\end{definition}
Next we study some properties, including convexity and lower semicontinuity in $H$, of the functional $\phi $.

\subsection{Convexity and lower semicontinuity of function $\phi $ in $H$}

We will prove the $\lambda$-convexity and lower semicontinuity of function $\phi$ in $H$. We note $\lambda>0$ is important for the long time behavior of the global solution.
\begin{proposition}\label{convex}
The functional $\phi :H\to [0,+\8]$ is proper, $\lambda$-convex, lower semicontinuous in $H$ and satisfies coercivity defined in \cite[(2.4.10)]{AGS}.
\end{proposition}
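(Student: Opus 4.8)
The plan is to verify the four assertions separately, disposing of properness and coercivity at once and then isolating the two analytic cores: the positive convexity modulus and the lower semicontinuity. Properness is immediate, since $\cosh\ge 1$ forces $\phi\ge 0>-\8$ everywhere, while $\phi(0)=\int_\I 1\,\ud x<\8$ shows the domain is nonempty and $\phi$ is finite somewhere. Coercivity in the sense of \cite[(2.4.10)]{AGS} asks for some $\tau_*>0$ and $u_*\in H$ with $\inf_{v\in H}\{\phi(v)+\frac1{2\tau_*}\|v-u_*\|^2\}>-\8$; because $\phi\ge 0$ this infimum is $\ge 0$ for every choice of $\tau_*$ and $u_*$, so coercivity holds automatically.

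For $\lambda$-convexity the key observation is that $\cosh''=\cosh\ge 1$, so $s\mapsto\cosh(s)$ is $1$-strongly convex on $\R$. Fixing $u,v\in V$ and $t\in[0,1]$, I would apply the scalar strong-convexity inequality pointwise with $s=u_{xx}(x)$ and $s'=v_{xx}(x)$, use the linearity $((1-t)u+tv)_{xx}=(1-t)u_{xx}+tv_{xx}$, and integrate over $\I$ to obtain
\begin{equation*}
\phi((1-t)u+tv)\le (1-t)\phi(u)+t\phi(v)-\tfrac12\,t(1-t)\int_\I\big(u_{xx}-v_{xx}\big)^2\,\ud x.
\end{equation*}
The ordinary convexity bound $\cosh((1-t)s+ts')\le(1-t)\cosh s+t\cosh s'$ simultaneously shows that $D(\phi)$ is convex and that the left side is finite when $u,v\in D(\phi)$; if either point lies outside $V$ the right side is $+\8$ and the inequality is vacuous. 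It then remains to convert the dissipation $\|u_{xx}-v_{xx}\|^2$ into $\|u-v\|^2$. Writing $w=u-v\in V$, periodicity makes both $w$ and $w_x$ mean-zero, so applying Poincare's inequality twice yields $\|w_{xx}\|^2\ge\kappa^{-1}\|w_x\|^2\ge\kappa^{-2}\|w\|^2$. Substituting this bound produces $\lambda$-convexity with the explicit positive modulus $\lambda=\kappa^{-2}$, which is precisely the positivity flagged as important for the long-time behavior.

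The bulk of the work is the lower semicontinuity in $H$. Given $h_n\to h$ in $L^2(\I)$ with $M:=\liminf_n\phi(h_n)<\8$, I would pass to a subsequence along which $\phi(h_n)\to M$ and $\phi(h_n)$ stays bounded. The a priori estimate of Observation 2 turns this bound into a uniform $L^2$ bound on $(h_n)_{xx}$ (indeed a uniform $L^p$ bound for every $p$), and together with the $L^2$ convergence and Poincare this gives a uniform $H^2(\I)$ bound. Extracting a weakly $H^2$-convergent subsequence, whose limit must coincide with $h$ by uniqueness of limits, I obtain $(h_n)_{xx}\rightharpoonup h_{xx}$ weakly in $L^2$, which also shows $h\in V$. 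Since $\cosh$ is convex, continuous and bounded below, the integral functional $w\mapsto\int_\I\cosh(w)\,\ud x$ is convex and strongly lower semicontinuous (by Fatou), hence weakly lower semicontinuous on $L^2$; applying this with $w=(h_n)_{xx}$ gives $\phi(h)\le\liminf_n\phi(h_n)=M$. I expect this step to be the principal obstacle, since it is the only place requiring a genuine compactness argument together with weak closure of the convex integrand, whereas the remaining properties reduce to the pointwise strong convexity of $\cosh$ and the Poincare inequality.
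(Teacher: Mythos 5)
Your proposal is correct and follows essentially the same route as the paper: properness and coercivity from $\phi\ge 0$, $\lambda$-convexity from $\cosh''=\cosh\ge 1$ combined with a twofold application of Poincar\'e's inequality to get $\|u_{xx}-v_{xx}\|^2\ge\kappa^{-2}\|u-v\|^2$ (the paper packages the strong-convexity step as $I''(t)\le 0$ for the auxiliary function $I(t)$, which is the same computation), and lower semicontinuity via a uniform $H^2$ bound, weak $H^2$ compactness, and weak lower semicontinuity of the convex integrand. Your treatment of the semicontinuity step is in fact slightly more careful than the paper's, since you explicitly pass to a subsequence realizing the $\liminf$ before invoking the uniform bound on $\phi(h_n)$.
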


\begin{proof}
Clearly since the typical function $h= 0\in D(\phi )$, so $D(\phi )=\{\phi <+\8\}$ is nonempty and $\phi $ is proper.
Due to the positivity of $\phi$, coercivity \cite[(2.4.10)]{AGS}, {i.e.,
$\exists u*\in D(\phi ), r*>0 \text{ such that } \inf\{\phi  (v): v\in H, \text{dist}(v,u*)\leq r*\}>-\infty,$
} is obvious.
\medskip

{\em $\lambda$-Convexity.} 
Given $u,v\in H$, $t\in (0,1)$, without loss of generality we assume $u,v\in D(\phi )$,
otherwise the convexity inequality is trivial. Therefore
from the definition of $\lambda$-convexity, we only need to prove for any $t\in[0,1]$, any $u,v\in H$ we have
\begin{equation}\label{lam-c}
\phi((1-t)u+t v)\leq (1-t)\phi(u)+ t \phi(v)-\frac12\lambda t(1-t) \|u-v\|_{L^2}^2.
\end{equation}
Denote $$I(t):= \int_{\I} (1-t)\cosh(u_{xx})+ t \cosh(v_{xx})- \frac{\lambda}{2}t(1-t)\|u-v\|_{L^2}^2-\cosh((1-t)u_{xx}+tv_{xx})\ud x$$
and notice $I(1)=I(0)=0$. Thus we only need to prove $I''(t)\leq 0$. It is easy to calculate that
$$I''(t)=\int_{\I} -\cosh [(1-t)u_{xx}+t v_{xx}](v_{xx}-u_{xx})^2 + \lambda(u-v)^2 \ud x\leq \int_{\I}-(u_{xx}-v_{xx})^2+ \lambda(u-v)^2 \ud x \leq 0 $$
due to Poincare's inequality with $\lambda=\frac{1}{\kappa^2}$.
Hence $\phi $ is $\lambda$-convex for $\lambda=\frac{1}{\kappa^2}=\left(\frac{2\pi}{L}\right)^4>0$.

\medskip

{\em Lower semicontinuity.} Consider a sequence $h_n\to h$ in $H$. We need to check
$$\phi (h)\le \liminf_n \phi (h_n).$$
If $h_n\in D(\phi )$ does not hold
for all large $n$, then lower semicontinuity holds.
Without loss of generality, we can assume $h_n\in D(\phi )$ for all $n$, and also
$$\liminf_n \phi (h_n)=\lim_n \phi (h_n).$$

First notice $h_n\in D(\phi)$ for any $n$ implies uniform estimate
\begin{align*}
  \frac{1}{p}\int_\I |(\partial_{xx}h_{n})^+|^p \ud x \leq & \int_{\I \cap (\partial_{xx}h_{n}^+>0) } e^{(\partial_{xx}h_{n})^+} \ud x \leq   \int_{\I \cap (\partial_{xx}h_{n}^+>0) } \left( e^{(\partial_{xx}h_{n})^+}+ e^{-(\partial_{xx}h_{n})^+} \right) \ud x\\
  \leq  &\int_{\I  } \left( e^{\partial_{xx}h_{n}}+ e^{-\partial_{xx}h_{n}} \right) \ud x = 2\phi(h_n) < \8.
\end{align*}
 Similarly, we have the same estimate for the negative  part $h_{xx}^-$.
Thus for  $p=2$,
\begin{equation}
\int_\I |h_{xx}|^2 \ud x \leq C,
\end{equation}
which yields that there exists $h^*\in V$ such that $h_n \rightharpoonup h^*$ in $V$. From the strong convergence $h_n \to h$ in $H$ we know the 
$h_n \rightharpoonup h$ in $V$.
Therefore from the convexity of $\cosh$ function, we know $\phi$ is also convex in $V$ and  lower semicontinuous w.r.t the weak topology of $V$
\begin{equation*}
\liminf_n \phi(h_n) \geq \phi(h).
\end{equation*}
Thus the lower semicontinuity in $H$ is proved.
\end{proof}

As long as we have the convexity of $\phi $, the $(\tau^{-1}+ \lambda)$-convexity is standard and the proof can be found in \cite[Section 2.4]{AGS}.
\begin{proposition}[$(\tau^{-1}+\lambda)$-convexity]\label{convex2}
For any $h,v_0,v_1\in D(\phi )$, there exists a curve $v:[0,1]\to D(\phi )$
such that $v(0)=v_0,\, v(1)=v_1$ and
the functional
\begin{equation}\label{PhiD}
\Phi(\tau,h;v):=\phi (v)+\frac1{2\tau} \|h-v\|_H^2
\end{equation}
satisfies $\tau^{-1}$-convexity, i.e.,
\begin{equation}\label{c}
\Phi(\tau,h;v(t))\le (1-t)\Phi(\tau,h;v_0)+t\Phi(\tau,h;v_1)-\frac1{2}(1/\tau+\lambda) t(1-t)\|v_0-v_1\|_H^2
\end{equation}
for all $\tau>0$, $t\in[0,1]$.
\end{proposition}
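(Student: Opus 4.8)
The plan is to choose the natural geodesic in the flat Hilbert space $H$, namely the straight-line interpolation $v(t):=(1-t)v_0+tv_1$, and to show that along this curve the two summands of $\Phi(\tau,h;\cdot)$ contribute their convexity defects additively. The first thing to check is that this curve is admissible, i.e. that $v(t)\in D(\phi)$ for every $t\in[0,1]$. This follows at once from the pointwise convexity of $\cosh$: since $\cosh((1-t)u_{xx}+tv_{xx})\le (1-t)\cosh(u_{xx})+t\cosh(v_{xx})$, integrating over $\I$ shows that $\phi(v(t))$ is finite whenever $\phi(v_0),\phi(v_1)<\infty$, so $D(\phi)$ is convex and the statement is well posed.

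Next I would treat the quadratic penalty $v\mapsto \frac{1}{2\tau}\|h-v\|_H^2$ separately. Setting $a:=h-v_0$ and $b:=h-v_1$, one has $h-v(t)=(1-t)a+tb$ and $a-b=v_1-v_0$, so expanding $\|(1-t)a+tb\|_H^2$ yields the exact identity
\begin{equation}
\frac{1}{2\tau}\|h-v(t)\|_H^2=(1-t)\frac{1}{2\tau}\|h-v_0\|_H^2+t\,\frac{1}{2\tau}\|h-v_1\|_H^2-\frac{1}{2\tau}t(1-t)\|v_0-v_1\|_H^2.
\end{equation}
This is just the parallelogram law, and it exhibits the penalty as $\tau^{-1}$-convex along line segments with equality. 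I would then invoke the $\lambda$-convexity of $\phi$ already established in Proposition \ref{convex}, applied to the same interpolant $v(t)$, which gives
\begin{equation}
\phi(v(t))\le (1-t)\phi(v_0)+t\,\phi(v_1)-\frac{\lambda}{2}t(1-t)\|v_0-v_1\|_H^2.
\end{equation}

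Adding these two relations and regrouping, the two defect terms combine into $-\frac{1}{2}(1/\tau+\lambda)t(1-t)\|v_0-v_1\|_H^2$, which is precisely the claimed inequality \eqref{c}. I do not anticipate a genuine obstacle: this is the standard fact that adding a $\tau^{-1}$-strongly convex quadratic to a $\lambda$-convex functional produces a $(\tau^{-1}+\lambda)$-convex functional, and in a flat Hilbert space the canonical connecting curve is the straight segment. The only point deserving a line of care is the admissibility $v(t)\in D(\phi)$ noted above; once that is in hand the argument reduces to the two displays, consistent with the reference \cite[Section 2.4]{AGS}.
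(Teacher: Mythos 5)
Your proof is correct and is exactly the standard argument that the paper itself does not write out but instead delegates to \cite[Section 2.4]{AGS}: straight-line interpolation, the exact parallelogram identity for the quadratic penalty, and the $\lambda$-convexity of $\phi$ from Proposition \ref{convex}, with the two defect terms adding. The admissibility check $v(t)\in D(\phi)$ via pointwise convexity of $\cosh$ is the right (and only) point needing care, and you handle it correctly.
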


\subsection{Existence and Long time behavior of the global solution}\label{Sec2.4}
After studying convexity and lower semicontinuity in the last section, we shall apply the convergence result in \cite[Theorem 4.0.4]{AGS} to derive that the discrete solution $h_n$ obtained by Euler scheme \eqref{E} converges to the variational inequality solution defined in Definition \ref{defweak}. For $v\in D(\phi)$, denote the local slope
\begin{equation}\label{localslope}
 |\pd \phi|(v):=\limsup_{w\to v}\frac{\max\{\phi(v)-\phi(w),0\}}{\mbox{dist}(v,w)}.
\end{equation}
\begin{remark}
In particular, by \cite[Proposition 1.4.4]{AGS}, we have the local slope is
$$|\pd \phi|(v)= \min \{\|\xi\|; \xi\in \pd \phi(v)\}$$
for any $v\in H$. Since $\phi$ is a smooth functional, its subdifferential $\pd \phi$ is single-valued and equals its Fr\`echet differential 
$$\pd \phi =D\phi(h):=[\sinh(h_{xx})]_{xx}.$$
\end{remark}
\begin{proposition}\label{EVI}
  Given $h_0  \in H$, for any $t>0$, $t=n\tau$, let $h_n(t)$ defined in \eqref{Euler10} be the approximation solution obtained by Euler scheme \eqref{E}, then
\begin{enumerate}
\item There exists a local Lipschitz curve $h(t):  [0,+ \8) \to H \in MM(\Phi; h_0)$ (i.e. minimizing movement for $\Phi$) such that
      \begin{equation}\label{tmp23}
        h_n(t)\to h(t)\text{  in }L^2(\I)
      \end{equation}
      and $h:[0,+\8)\to H$ is the unique EVI solution in the sense that $h$ is unique among all the locally absolutely continuous curves such that $\lim_{t\to 0} h(t)=h_0 $ in $H$ and
\begin{equation}\label{vi}
\frac12\frac\ud{\ud t}\|h(t)-v\|^2+ \frac{\lambda}{2}\|h(t)-v\|^2\le \phi (v)-\phi (h(t)), \quad \text{  a.e. } t>0,\,\forall v\in D(\phi );
\end{equation}
\item 
We have the following regularities
\begin{align}
\phi (h(t)) &\le \phi (v)+\frac1{2t}\|v-h_0 \|_H^2, \qquad \forall v\in D(\phi ),\label{phi-dec}\\
|\pd\phi |^2(h(t)) &\le |\pd\phi |^2(v)+\frac1{t^2}\|v-h_0 \|_H^2,\qquad\forall v\in D(|\pd \phi |);\label{Dphi-dec}
\end{align}
\item 
There exist $t_0>0$ and we have the exponential decay of $h(t)$
\begin{equation}
\frac{\lambda}{2}\|h(t)-h^*\|\leq (\phi(h_0)-\phi(h^*))e^{-2\lambda(t-t_0)}, \quad \text{ for any }t\geq t_0>0,
\end{equation}
where $h^*=0$ is the unique minimizer of $\phi$.
\end{enumerate}
\end{proposition}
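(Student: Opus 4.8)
The plan is to obtain parts (1) and (2) essentially by citation from the abstract theory of gradient flows for $\lambda$-convex functionals on the Hilbert space $(H,\mathrm{dist})$, and then to extract the exponential decay (3) by hand from the EVI together with the strict positivity $\lambda>0$. The structural hypotheses of \cite[Theorem~4.0.4]{AGS} have already been verified in the excerpt: Proposition~\ref{convex} shows that $\phi$ is proper, lower semicontinuous, coercive, and $\lambda$-convex with $\lambda=1/\kappa^2>0$, and Proposition~\ref{convex2} supplies the $(\tau^{-1}+\lambda)$-convexity of the Moreau--Yosida functional $\Phi$ along generalized geodesics. Since $H$ is Hilbertian, these are exactly the assumptions needed, so the abstract theorem produces a locally Lipschitz curve $h\in MM(\Phi;h_0)$ with $h_n(t)\to h(t)$ in $L^2(\I)$ and characterizes it uniquely as the EVI solution \eqref{vi}; this is precisely statement (1).

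For (2) I would invoke the regularizing a priori estimates that accompany \cite[Theorem~4.0.4]{AGS}. The bound \eqref{phi-dec} is the standard energy estimate for the backward Euler scheme \eqref{E}, obtained by comparing $\phi(h(t))$ against the constant competitor $v$ in the proximal problem, and \eqref{Dphi-dec} is the corresponding slope estimate; both hold verbatim in the Hilbertian case once $\lambda$-convexity is available, using that the subdifferential is single-valued with $\partial\phi(h)=[\sinh(h_{xx})]_{xx}$ and $|\partial\phi|(h)=\|\partial\phi(h)\|$ as noted in the preceding remark. No genuinely new computation is required beyond identifying the metric slope with this Fr\'echet differential.

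For the exponential decay (3) I would first identify the minimizer: since $\cosh\ge 1$ with equality only at $0$, we have $\phi(h)=\int_\I\cosh(h_{xx})\,\ud x\ge|\I|$ with equality iff $h_{xx}\equiv0$, and the mean-zero periodic constraint defining $H$ then forces $h\equiv0$, so $h^*=0$ is the unique minimizer. Testing the EVI \eqref{vi} with $v=h^*$ already gives $\tfrac12\tfrac{\ud}{\ud t}\|h(t)-h^*\|^2+\tfrac{\lambda}{2}\|h(t)-h^*\|^2\le\phi(h^*)-\phi(h(t))\le0$, hence an immediate decay of $\|h(t)-h^*\|^2$ by Gr\"onwall. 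To match the stated form with the energy gap on the right, I would instead combine the energy-dissipation identity $\tfrac{\ud}{\ud t}\phi(h(t))=-|\partial\phi|^2(h(t))$ with the gradient-dominance (Polyak--{\L}ojasiewicz) bound $|\partial\phi|^2(h(t))\ge 2\lambda\bigl(\phi(h(t))-\phi(h^*)\bigr)$, which is the standard consequence of $\lambda$-convexity about a minimizer. This yields $\phi(h(t))-\phi(h^*)\le e^{-2\lambda(t-t_0)}\bigl(\phi(h_0)-\phi(h^*)\bigr)$ once $t\ge t_0$, where $t_0>0$ is the regularizing time after which $h(t)\in D(|\partial\phi|)$ so that \eqref{Dphi-dec} applies and the dissipation identity is legitimate. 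Feeding this back into the convexity lower bound $\tfrac{\lambda}{2}\|h(t)-h^*\|^2\le\phi(h(t))-\phi(h^*)$ produces the claimed decay of $\|h(t)-h^*\|$ with rate $2\lambda$.

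The genuinely delicate points, and where I expect the most care, are (a) justifying the energy-dissipation identity rigorously for the exponential nonlinearity, i.e. confirming that the EVI solution is regular enough that $[\sinh(h_{xx})]_{xx}$ is the actual $L^2$ slope and not merely a formal candidate, which is exactly why the regularizing time $t_0>0$ and estimate \eqref{Dphi-dec} enter; and (b) pinning down $t_0$ and the sharp constant $2\lambda$, where the interplay of \eqref{Dphi-dec}, the gradient-dominance inequality, and the positivity $\lambda>0$ must be made quantitative. By contrast, the convergence of the scheme and the EVI characterization in (1) reduce to an application of \cite[Theorem~4.0.4]{AGS} once Propositions~\ref{convex} and~\ref{convex2} are granted.
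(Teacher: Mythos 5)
Your proposal is correct and follows essentially the same route as the paper: parts (1) and (2) are obtained, exactly as in the text, by invoking \cite[Theorem~4.0.4]{AGS} together with \cite[Theorem~2.4.14]{AGS} once Propositions~\ref{convex} and~\ref{convex2} are in place. Your hand-derived proof of the decay in part (3) --- the energy--dissipation identity combined with the gradient-dominance (Polyak--{\L}ojasiewicz) consequence of $\lambda$-convexity, $\|\pd\phi(h)\|^2\ge 2\lambda(\phi(h)-\phi(h^*))$ --- is precisely the argument the paper itself records in Appendix~\ref{A:LTalt}, so nothing in your approach genuinely departs from the paper's.
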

This Proposition is a direct result by combining \cite[Theorem 4.0.4]{AGS} and \cite[Theorem 2.4.14]{AGS} with Proposition \ref{convex} and Proposition \ref{convex2}.  We remark the exponential convergence rate $\lambda=\frac{1}{\kappa^2}=\left(\frac{2\pi}{L} \right)^4$. So the convergence speed is the square of  the Poincare-Wirtinger's constant.

Next by Proposition \ref{convex} and \cite[Theorem 2.4.15]{AGS}, we claim that given a better initial data $h_0$, the EVI solution obtained above is a global strong solution to \eqref{PDE} with better properties as follows.
\begin{theorem}\label{cor10}
Given any  $T>0$ and initial datum $h_0  \in D(\pd \phi)$ such that $\phi(h_0 )<+\8$, the solution obtained in Proposition \ref{EVI} is a global strong solution
in the sense that $\pd_{t} h = -\pd \phi= -\pd_{xx}(\sinh(h_{xx}))$ holds for all $t\geq 0$
with the following regularities
 $$h\in C([0,T];D(\pd \phi) )\cap C^1([0,T];H)$$
and the decay estimate
\begin{equation}\label{decay1}
 \|h(t)\|\leq \|h_0\|, \quad \forall t\geq 0,\quad
 \|\pd_t h(t)\|=\|\pd \phi (h(t))\|\leq e^{-\lambda t}\|\pd \phi (h_0)\|_{L^2}, \quad \forall t\geq 0.
\end{equation}
\end{theorem}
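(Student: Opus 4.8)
The plan is to treat Theorem~\ref{cor10} as a regularity upgrade of the EVI solution produced in Proposition~\ref{EVI}, obtained by specializing the initial datum to $D(\pd\phi)$. Since Proposition~\ref{convex} establishes that $\phi$ is proper, $\lambda$-convex and lower semicontinuous on $H$, the abstract machinery of \cite[Theorem~2.4.15]{AGS} applies directly: for a $\lambda$-convex lower semicontinuous functional, if $h_0\in D(\pd\phi)$ then the gradient flow curve $h(t)$ remains in $D(\pd\phi)$ for all $t\ge0$, the right derivative $\tfrac{d^+}{dt}h(t)$ exists at every $t\ge0$, and the evolution equation $\pd_t h(t)=-\pd^0\phi(h(t))$ holds, where $\pd^0\phi$ denotes the minimal-norm element of the subdifferential. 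First I would invoke this theorem to obtain the strong-form identity and the locally Lipschitz-in-time bound on $h$.

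The second step is to translate the abstract subdifferential into the concrete differential operator. As recorded in the Remark preceding Proposition~\ref{EVI}, $\phi$ is smooth, so $\pd\phi$ is single-valued and coincides with its Fr\'echet differential $D\phi(h)=[\sinh(h_{xx})]_{xx}$; hence $\pd^0\phi(h(t))=\pd_{xx}(\sinh(h_{xx}(t)))$ and the abstract flow becomes exactly $\pd_t h=-\pd_{xx}(\sinh(h_{xx}))$. The membership $h(t)\in D(\pd\phi)$ means precisely that $\sinh(h_{xx}(t))\in H^2(\I)$ with $\pd_{xx}(\sinh(h_{xx}(t)))\in H$, which is the regularity claimed by $h\in C([0,T];D(\pd\phi))$; the continuity in the graph norm, and the promotion of $\pd_t h=-\pd\phi(h)$ from a.e.\ $t$ to every $t$ together with $h\in C^1([0,T];H)$, follow from the continuity in $H$ of the slope map $t\mapsto|\pd\phi|(h(t))=\|\pd\phi(h(t))\|$ supplied by \cite[Theorem~2.4.15]{AGS}.

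For the decay estimates I would argue as follows. Because $h^*=0$ is the unique minimizer of $\phi$, the contraction property of $\lambda$-convex gradient flows (equivalently, taking $v=0$ in the EVI~\eqref{vi} and using $\phi(0)\le\phi(h(t))$) yields $\tfrac12\tfrac{d}{dt}\|h(t)\|^2\le-\tfrac{\lambda}{2}\|h(t)\|^2\le0$, whence $\|h(t)\|\le\|h_0\|$ for all $t\ge0$. For the exponential decay of the velocity, single-valuedness gives $\|\pd_t h(t)\|=\|\pd\phi(h(t))\|$, and the $\lambda$-convexity is exactly the statement $D^2\phi(h)\ge\lambda I$ (this is the content of the inequality $I''(t)\le0$ proved in Proposition~\ref{convex}, since $\cosh\ge1$ and Poincar\'e give $\int_\I\cosh(h_{xx})w_{xx}^2\ge\lambda\|w\|^2$). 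Differentiating the equation in time and pairing with $\pd_t h$ gives formally
\begin{equation*}
\frac12\frac{d}{dt}\|\pd_t h\|^2=-\la D^2\phi(h)[\pd_t h],\pd_t h\ra\le-\lambda\|\pd_t h\|^2,
\end{equation*}
so Gr\"onwall yields $\|\pd_t h(t)\|\le e^{-\lambda t}\|\pd\phi(h_0)\|$. Rigorously this is the decay of the local slope $|\pd\phi|(h(t))\le e^{-\lambda t}|\pd\phi|(h_0)$ furnished by the a priori estimates in \cite[Theorem~2.4.15]{AGS}, combined with the identification $|\pd\phi|=\|\pd\phi\|$.

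The step I expect to be the main obstacle is the rigorous handling of the exponential nonlinearity in the identification $\pd\phi(h)=[\sinh(h_{xx})]_{xx}$: one must verify that $\phi$ is genuinely Fr\'echet differentiable on $D(\pd\phi)$ despite the exponential growth of $\cosh$, characterize $D(\pd\phi)$ so that $\sinh(h_{xx})$ admits two weak $L^2$ derivatives, and confirm that the minimal-selection element $\pd^0\phi$ coincides with this fourth-order operator. The uniform $L^p$ control on $h_{xx}$ coming from boundedness of $\phi$ (Observation~2), together with Sobolev embedding in one dimension, should provide the bounds needed to make these manipulations, and the time-differentiation in the velocity estimate, rigorous; the remaining passages are then routine given the AGS framework.
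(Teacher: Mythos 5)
Your proposal is correct and follows essentially the same route as the paper: both reduce the theorem to \cite[Theorem~2.4.15]{AGS} applied with $h_0\in D(\pd\phi)$, identify $\pd\phi$ with the Fr\'echet differential $[\sinh(h_{xx})]_{xx}$ via the preceding Remark, obtain $\|h(t)\|\le\|h_0\|$ from the EVI \eqref{vi} with $v=0$ (using $\cosh\ge 1$ so $\phi(0)\le\phi(h(t))$), and get the exponential decay of $\|\pd_t h\|$ from the monotonicity of $e^{\lambda t}|\pd\phi|(h(t))$ in statement (ii) of that theorem. Your added formal computation with $D^2\phi\ge\lambda I$ is just an expanded justification of that last citation, not a different argument.
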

\begin{remark}
In general $D(\pd \phi)$ is given by all $v\in D(\phi)$ such that $\partial \phi(v) \neq \emptyset.$ In our case, $D(\pd \phi)=\{h\in V; (\sinh(h_{xx}))_{xx}\in H\}$.
\end{remark}
\begin{proof}[Proof of Theorem \ref{cor10}]
Since Proposition \ref{convex} verifies all the assumptions in \cite[Theorem 2.4.15]{AGS}, so we directly apply \cite[Theorem 2.4.15]{AGS} for initial datum $h_0  \in D(\pd \phi)$ (see \cite[Remark 2.4.16]{AGS}). First $\pd_{t} h = -\pd \phi= -\pd_{xx}(\sinh(h_{xx}))$ follows from \cite[Theorem 2.3.3]{AGS} and \cite[Corollary1.4.2]{AGS}.  Then the statement (i) and (iii) in \cite[Theorem 2.4.15]{AGS} show that $h\in C([0,T]; D(\pd \phi))$ and
\begin{equation}
    |\pd_{t^+} h|(t):=\lim_{s\to t^+} \frac{\|h(s)-h(t)\|}{s-t}=|\pd \phi|(h(t))=\|\sinh(h_{xx})_{xx}\|  \quad \text{for any } t\geq 0.
\end{equation}
Finally the non-increasing property of $\|h(t)\|$ follows from \eqref{vi} while the non-increasing property of $e^{\lambda t}\|\pd \phi (h(t))\|$ follows from the statement (ii) in \cite[Theorem 2.4.15]{AGS}.
\end{proof}

\section{Numerics} \label{sec:numerics}

In this section, we numerically explore some properties of the PDE \eqref{anyaexpde}. Due to the exponential dependence on $\beta\partial_{xxx}h$ in \eqref{anyaexpde}, we are limited to the setting of either relatively high temperature (small $\beta$) or small curvature, in which case the PDE can be solved numerically. As in \cite{mw-krug}, we focus on two key phenomena: $(a)$ wetting, or how compactly supported solutions evolve to fill the domain and $(b)$ self-similar structures in the collapse to equilibrium.  Similar numerical studies were undertaken in \cite{mw-krug} for the Arrhenius rate PDE \eqref{Hm1expde}.  While certainly not an exhaustive study of phenomena in these models, they are key features of the dynamics one would like to understand for the evolution of crystal surfaces.  The numerical method implemented below involved using a symmetric finite difference stencil to discretize spatial derivatives, along with a stiff ODE solver such as {\it ode15s} in {\it Matlab} in time.  Similar methods were employed in \cite{mw-krug,liu2017asymmetry} and are described in some more detail there.

\subsection{Wetting}
Motivated by properties of \eqref{Hm1expde} shown in \cite{mw-krug}, one phenomenon we investigate for \eqref{anyaexpde} is how quickly mass spreads from regions of non-zero height into regions with zero height.  This process is known as wetting in the study of thin films. In order for facets (macroscopic flat regions on the crystal surface) to be stable features of a surface, the wetting rate should be finite. In Figure \ref{fig:wetting1}, we study this phenomenon for the PDE \eqref{anyaexpde}. 
Similar to \eqref{Hm1expde}, it appears numerically that the solution can wet at finite rate.
One interesting difference between \eqref{anyaexpde} and \eqref{Hm1expde} is that for an initial nonnegative compactly supported profile, the numerical solution to \eqref{Hm1expde} remains positive while the numerical solution to \eqref{anyaexpde} dips below zero before levelling off. The wetting rate was investigated for the initial profile
\begin{equation}
\label{eqn:initdata2}
h(0,x) = \left\{ \begin{array}{cc}
 e^{8-|x|^{-1} - (0.5-|x|)^{-1}} \ \  &\text{for} \ 0 < |x| < \frac12, \\
0 \ \ &\text{otherwise}.
\end{array} \right.
\end{equation}

Given the high curvatures present in this model, the discretization in space is taken to be somewhat coarse when using the discrete approximation we have implemented here to the flow of \eqref{anyaexpde} given by using symmetric finite difference operators in space and the variable time step stiff solver {\it ode15s} in {\it Matlab} in time. Our methods are comparable to the methods used in \cite{mw-krug,liu2017asymmetry}.  The results are consistent as we refine the spatial grid to the extent possible, but the grid can only be refined so much for these extremely stiff approximations using the PDE solvers implemented here due to the high curvatures present.  For related algorithms that can be implemented with much finer grids but that require longer running times, see the recent works of \cite{liu2016existence,craig2020proximal}, for which similar and consistent dynamics have been observed for PDEs of the form \eqref{Hm1expde}.  

\begin{figure}
\centering
\includegraphics[width=2.5in]{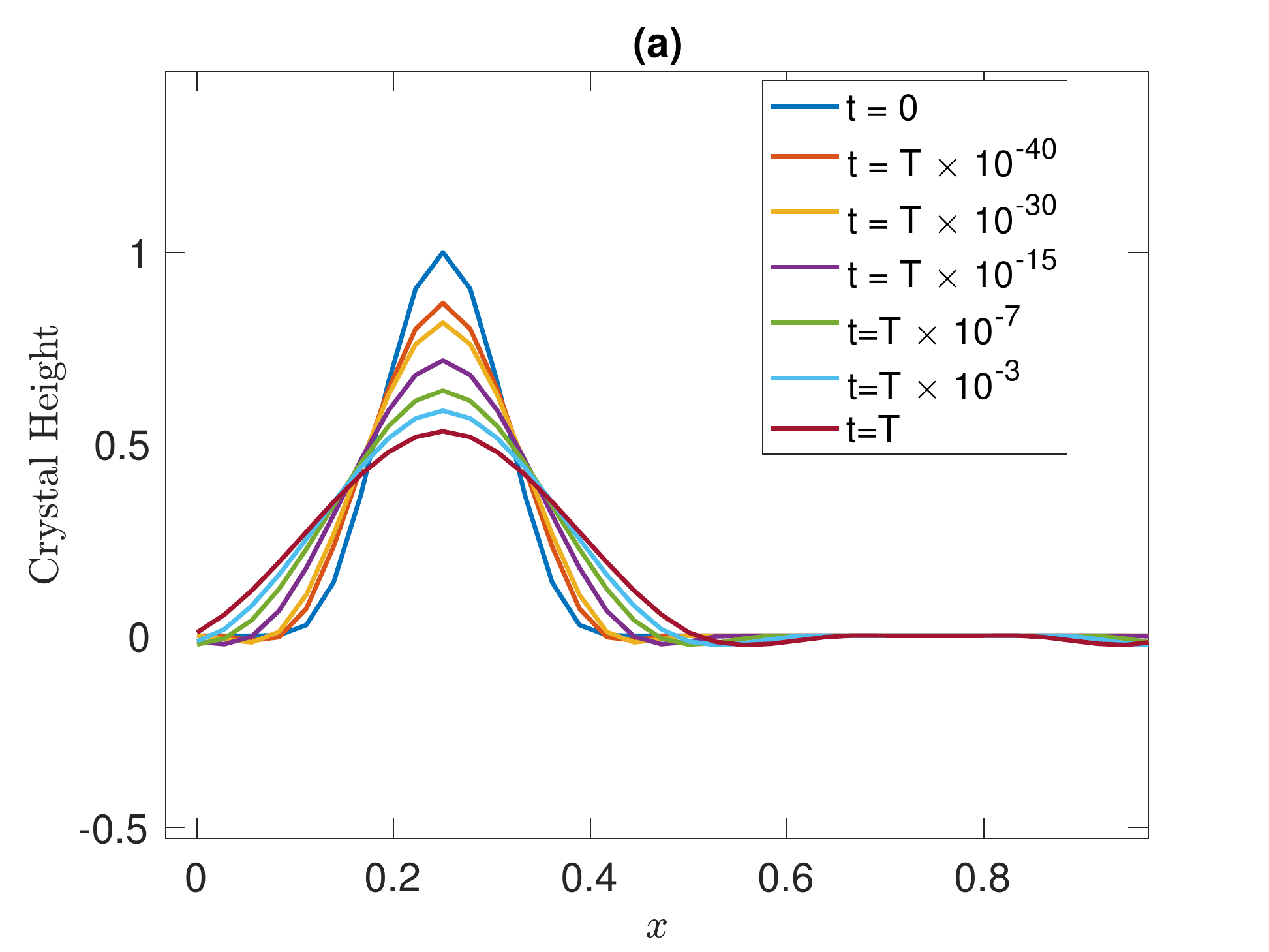}
\includegraphics[width=2.5in]{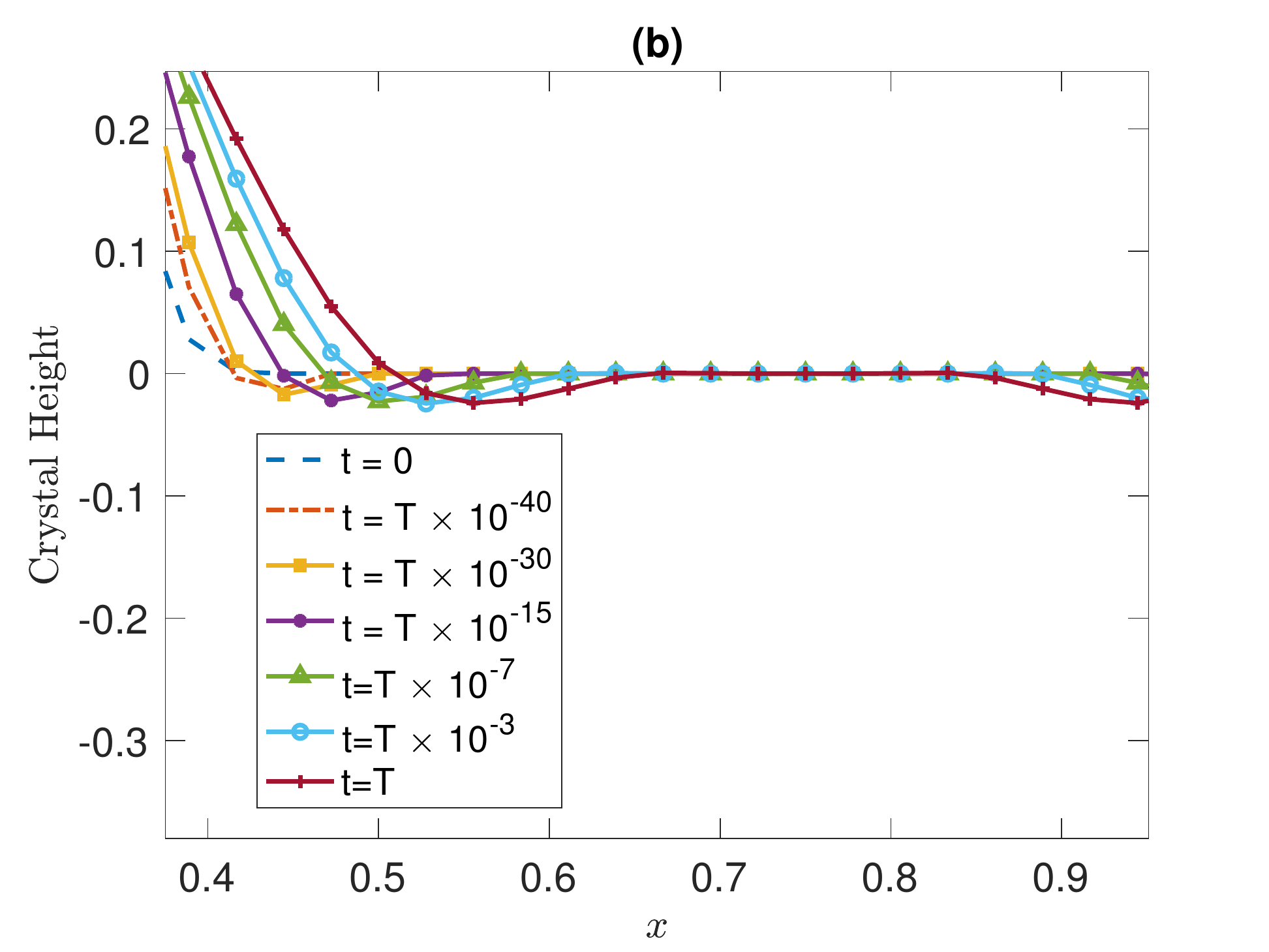}
\caption{ (Color online) Snapshots of solution of PDE \eqref{anyaexpde} with $\beta=.05,$  from the initial profile in \eqref{eqn:initdata2}, at various times in an interval of length $T=5\times 10^{-10}$ $(a)$ and a blowup in the region of zero initial height $(b)$.}
\label{fig:wetting1}
\end{figure}

\subsection{Self-Similarity}

Again following the analysis in \cite{mw-krug}, we study the behavior of the surfaces as they near equilibrium ($h\equiv 0$).  In Figure \ref{fig:selfsimilar} we show that the surfaces appear to approximately factor as $h(t,x) = \phi(t) g(x)$ for very large $t.$  The results in that figure are generated via a fixed point iteration in which the surface is evolved for some length of time and then rescaled so that the surface's maximal (in absolute value) height is 1, and then evolved and rescaled repeatedly until convergence. The plot shows the last two fixed point iterations (before rescaling).  The fact that they nearly coincide indicates that the iterations have converged to $g(x)$. We note that the function $g(x)$ will typically have some dependence on the particular initial profile. In this simulation we took $h(0,x) = \sin(2\pi x)$ and $\beta=.25$. It appears that the self-similar solution is quite regular. This is in contrast to \eqref{Hm1expde}, in which a singularity forms in the self-similar profile at its minimum.  
\begin{figure} 
\centering
\includegraphics[width=3.5in]{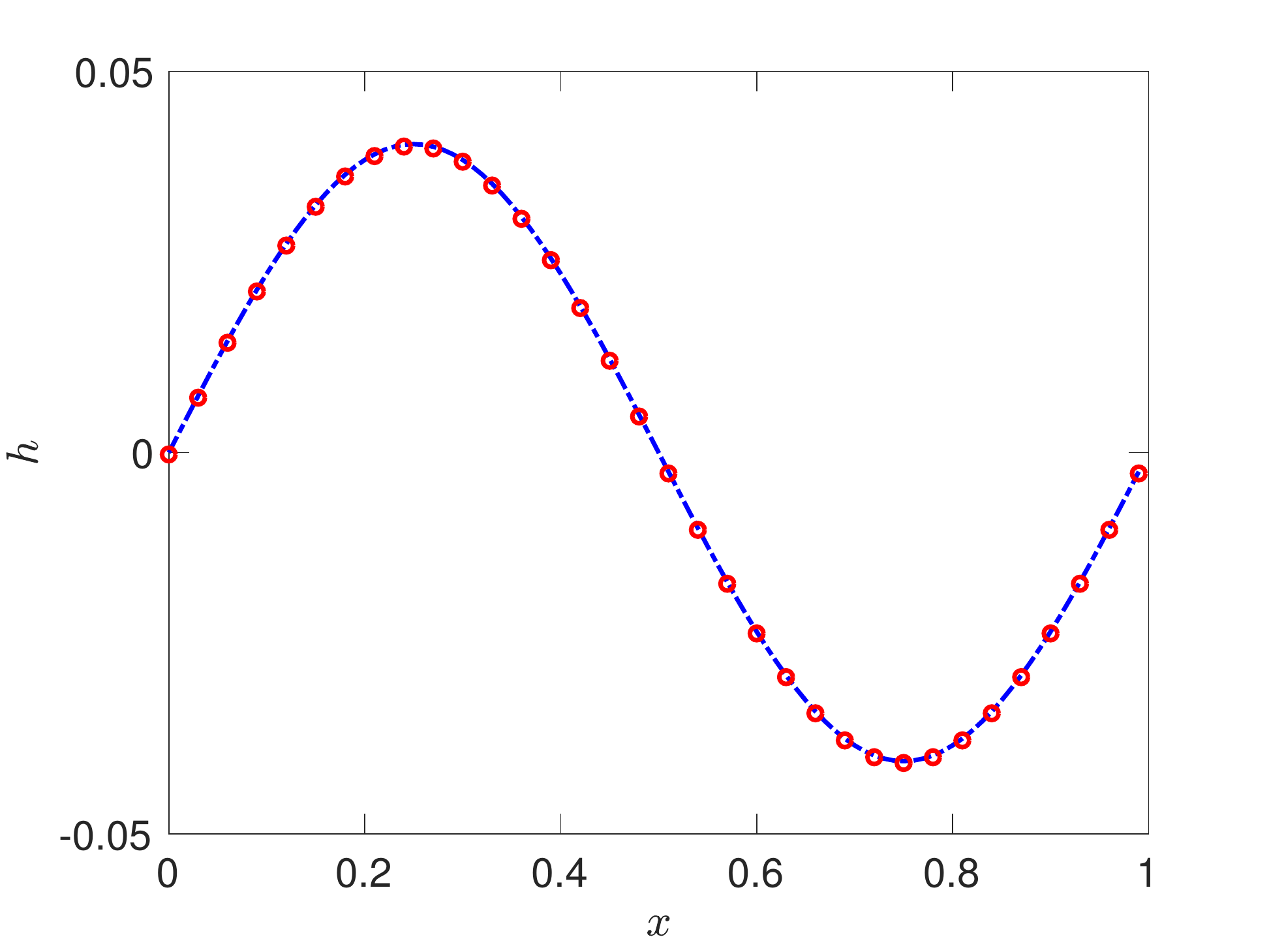}
\caption{ (Color online) Results of fixed-point iteration in which PDE  \eqref{anyaexpde} is evolved for some interval of time, then rescaled to have maximum height (in absolute value) equal to 1 and then evolved and rescaled repeatedly until convergence. The plot shows the last two fixed point iterations (before rescaling). The solution appears to be approximately of the form $h(t,x) = g(x) \phi (t).$ Here, we renormalize after intervals of length $T=5e-4$.}
\label{fig:selfsimilar}
\end{figure}

\appendix
\section{Expectation of Jump Rates}\label{rate-expect}
Recall the definition $z_N^i = h_N^{i+1}-h_N^i$, and the expression for the rates:
\beq\label{metrop-def} r^{i,j}(h) = \e\left(-\frac{\beta}{2}\bigg[H(J_i^jh)-H(h)\bigg]\right),\quad |i-j|=1,\eeq where 
$$H(h) = \sum_{i=0}^{N-1}(z^i)^2.$$ 
A simple computation gives the explicit expression
\beq\label{met-r_right}r^{i,i+1}(z_N) =e^{-3\beta} \text{exp}\left(-\beta(z^{i-1}-2z^i + z^{i+1})\right)\eeq and
\beq\label{met-r_left}r^{i+1,i}(z_N) =e^{-3\beta} \text{exp}\left(\beta(z^{i-1}-2z^i + z^{i+1})\right).\eeq

We will compute the expectation $\langle\J^i\rangle = \langle r^{i,i+1}\rangle - \langle r^{i+1,i}\rangle$ with respect to the local Gibbs measure, assumed to be a good approximation to the true measure at macroscopic time $t>0$ (microscopic time $N^4t$) and for large enough $N$. 
This measure is given by
$$p_{\bm\lambda}(z_N) \propto \e\left(-\beta\sum_{i=0}^{N-1}(z^i)^2 + \sum_{i=0}^{N-1}\lambda^iz^i\right),\quad z_N\in\Z^N.$$ The $\lambda^i$ are chosen so that under $\rho_{\lambda}$, the expectation of $z_N^i$ is $N^2h_x(t, i/N)$. One can show that this implies $N^{-2}\lambda^{Nx}\to 2\beta h_x(t,x)$ as $N\to\infty.$

Note that $p_{\bm\lambda}$ is a product of one dimensional measures which we denote $p_{\lambda_i}$, where $p_{\lambda}(n) \propto e^{-\beta n^2 + \lambda n}.$ We let $\langle\cdot\rangle_{\bm\lambda}$ denote expectation with respect to $p_\lambda$ and $\langle\cdot\rangle_{\lambda_i}$ denote expectation with respect to $p_{\lambda_i}$. Using the expression~\eqref{met-r_right}, we have
\beq\label{rate-prod}\langle r^{i,i+1}(z_N)\rangle_{\bm\lambda}= e^{-3\beta}\langle e^{\beta z}\rangle_{\lambda_{i-1}}\langle e^{-2\beta z}\rangle_{\lambda_{i}}\langle e^{\beta z}\rangle_{\lambda_{i+1}}.\eeq
\begin{lemma}\label{exponential_expectation}
	Let $m\in\mathbb Z$ and $z\in\mathbb Z$ be a random variable distributed according to $p_{\lambda}$, where $p_{\lambda}(n) \propto e^{-\beta n^2 + \lambda n}.$ Then 
	\begin{equation}\label{exp-exp}
	\langle e^{m\beta z}\rangle_{\lambda} =\begin{cases}
	e^{\beta m^2/4 + \lambda m/2},\quad\text{if }m\text{ even,}
	\\ Z\left(\beta, \frac{\lambda}{2\beta}\right)e^{\beta m^2/4 + \lambda m/2}\quad\text{if }m\text{ odd,}
	\end{cases}
	\end{equation} where 
	$$Z(\beta,\alpha) = \frac{\sum_{n=-\infty}^{\infty}e^{-\beta(n-(\alpha + \frac12))^2}}{\sum_{n=-\infty}^{\infty}e^{-\beta(n-\alpha)^2}}.$$ If $\beta$ is small, then $Z(\beta, \alpha) = 1 + o(\beta)$. 
\end{lemma}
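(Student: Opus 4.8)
The plan is to compute the expectation directly as a ratio of two Gaussian sums over $\Z$ and to reduce both sums to a single theta-type function, whose behavior under integer shifts produces the even/odd dichotomy. Writing $\langle e^{m\beta z}\rangle_\lambda = \left(\sum_n e^{-\beta n^2 + (\lambda + m\beta)n}\right)/\left(\sum_n e^{-\beta n^2 + \lambda n}\right)$, I would complete the square in each exponent: for a generic linear coefficient $c$, one has $-\beta n^2 + cn = -\beta(n - \tfrac{c}{2\beta})^2 + \tfrac{c^2}{4\beta}$. Introducing $S(\alpha) := \sum_{n\in\Z} e^{-\beta(n-\alpha)^2}$, the numerator (with $c = \lambda + m\beta$) and denominator (with $c = \lambda$) then differ only by an explicit exponential prefactor, and a short computation yields
$$
\langle e^{m\beta z}\rangle_\lambda = e^{\beta m^2/4 + \lambda m/2}\,\frac{S\!\left(\tfrac{\lambda}{2\beta} + \tfrac{m}{2}\right)}{S\!\left(\tfrac{\lambda}{2\beta}\right)}.
$$

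The key structural observation is that $S$ is periodic of period $1$: re-indexing the summation gives $S(\alpha + 1) = S(\alpha)$ for all $\alpha$. Hence when $m$ is even, $\tfrac{m}{2}\in\Z$ and the ratio $S(\tfrac{\lambda}{2\beta} + \tfrac{m}{2})/S(\tfrac{\lambda}{2\beta})$ equals $1$; when $m$ is odd, $\tfrac{m}{2} = \lfloor m/2\rfloor + \tfrac12$ and the same periodicity collapses the ratio to $S(\tfrac{\lambda}{2\beta} + \tfrac12)/S(\tfrac{\lambda}{2\beta}) = Z(\beta, \tfrac{\lambda}{2\beta})$. This produces exactly the two cases in \eqref{exp-exp}.

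For the small-$\beta$ asymptotic I would apply the Poisson summation formula to $S(\alpha)$. Using $\widehat{e^{-\beta(\cdot - \alpha)^2}}(k) = \sqrt{\pi/\beta}\,e^{-2\pi i k\alpha}e^{-\pi^2 k^2/\beta}$, Poisson summation gives
$$
S(\alpha) = \sqrt{\tfrac{\pi}{\beta}}\left(1 + 2\sum_{k=1}^\infty e^{-\pi^2 k^2/\beta}\cos(2\pi k\alpha)\right).
$$
Since $|\cos(2\pi k\alpha)| \le 1$, the correction term is bounded by $C e^{-\pi^2/\beta}$ uniformly in $\alpha$, which is $o(\beta^N)$ for every $N$ as $\beta\to0$. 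Forming the ratio $Z(\beta,\alpha) = S(\alpha + \tfrac12)/S(\alpha)$, the common prefactor $\sqrt{\pi/\beta}$ cancels, and both numerator and denominator equal $1$ plus a term that is $o(\beta^N)$ for every $N$, so in particular $Z(\beta,\alpha) = 1 + o(\beta)$.

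I expect the only genuinely delicate point to be this asymptotic step: one must convert the slowly converging sum $S(\alpha)$, whose terms decay only like $e^{-\beta n^2}$ and are therefore nearly constant for small $\beta$, into the rapidly converging dual series, which is precisely what Poisson summation (the Jacobi theta transformation) accomplishes. A secondary subtlety is that the relevant argument is $\alpha = \lambda/(2\beta)$, which grows as $\beta\to0$; but because the Poisson correction is controlled by $|\cos|\le1$ independently of $\alpha$, the estimate is automatically uniform in $\alpha$ and this causes no difficulty. Everything else — the completion of the square and the period-$1$ symmetry of $S$ — is elementary bookkeeping.
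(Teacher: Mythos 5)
Your proposal is correct and follows essentially the same route as the paper: complete the square to extract the prefactor $e^{\beta m^2/4+\lambda m/2}$, then use the integer-shift invariance of the Gaussian sum to resolve the even/odd dichotomy. The only difference is that you spell out the small-$\beta$ asymptotic via Poisson summation, whereas the paper simply invokes properties of the Jacobi theta function $\vartheta_3$ --- these are the same argument, and your version correctly notes the uniformity in $\alpha$ and in fact yields the stronger bound $Z(\beta,\alpha)=1+O(e^{-\pi^2/\beta})$.
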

\begin{proof} We have
\begin{equation}\begin{split}
	\langle e^{m\beta z}\rangle_{\lambda} &= \frac{\sum_{n=-\infty}^{\infty}e^{m\beta n-\beta n^2 + \lambda n }}{\sum_{n=-\infty}^{\infty}e^{-\beta n^2 + \lambda n}} \\
	&= \text{exp}\left(\beta\left(\frac m2 + \frac{\lambda}{2\beta}\right)\right)^2 - (\frac{\lambda}{2\beta})^2)\frac{\sum_{n=-\infty}^{\infty}e^{-\beta(n - (\frac m2 + \frac{\lambda}{2\beta}))^2 }}{\sum_{n=-\infty}^{\infty}e^{-\beta (n - \frac{\lambda}{2\beta})^2}}.
\end{split}\end{equation}
The factor in front of the ratio of sums simplifies to 
$$	e^{\beta m^2/4 + \lambda m/2}.$$ If $m$ is even then, by summing over $n-\frac m2$ , we see that the numerator of the sum ratio equals the denominator. If $n$ is odd, we can sum over $n-\frac{m-1}{2}$ in the numerator to obtain
$$\frac{\sum_{n=-\infty}^{\infty}e^{-\beta(n - \frac12 - \frac{\lambda}{2\beta})^2 }}{\sum_{n=-\infty}^{\infty}e^{-\beta (n - \frac{\lambda}{2\beta})^2}}.$$ To see that $Z(\beta, \alpha) = 1 +o(\beta)$ one can express the sums in terms of the Jacobi theta function $\vartheta_3$ and use properties of this function. 
\end{proof}
Using Lemma \ref{exponential_expectation}, we can finish computing the expectation $\langle r^{i,i+1}\rangle_{\bm\lambda}$. Substituting~\eqref{exp-exp} into~\eqref{rate-prod}, we obtain

\beqs
\langle r^{i,i+1}\rangle_{\bm\lambda} &= e^{-3\beta}\left(e^{\beta/4 + \lambda^{i-1}/2}\right)\left(e^{\beta - \lambda^i}\right)\left(e^{\beta/4 + \lambda^{i+1}/2}\right)Z(\lambda^{i-1},\beta)Z(\lambda^{i+1},\beta) \\
&=e^{-\frac32\beta}\mathrm{exp}\left(\frac12\left(\lambda^{i-1} - 2\lambda^{i} + \lambda^{i+1}\right)\right)Z(\lambda^{i-1},\beta)Z(\lambda^{i+1},\beta)\\
&\approx e^{-\frac32\beta}\mathrm{exp}\left(\frac12\left(\lambda^{i-1} - 2\lambda^{i} + \lambda^{i+1}\right)\right),
\eeqs
where the last line is for $\beta$ small. Similarly, we have
$$\langle r^{i+1,i}\rangle_{\bm\lambda} \approx e^{-\frac32\beta}\mathrm{exp}\left(-\frac12\left(\lambda^{i-1} - 2\lambda^{i} + \lambda^{i+1}\right)\right).$$

Since $N^{-2}\lambda^i\approx 2\beta h_x(t,i/N)$ for $N$ large, we have
$$\lambda^{i-1} - 2\lambda^{i} + \lambda^{i+1}\approx 2\beta h_{xxx}(t,i/N),$$ so that
$$\langle\J^i\rangle_{\bm\lambda}=\langle r^{i,i+1}-r^{i+1,i}\rangle_{\bm\lambda}\approx e^{-\frac32\beta}\sinh(\beta h_{xxx}(t,i/N)).$$

\section{Another method for long time behavior using bi-variational structures}
\label{A:LTalt}
Since $\phi$ is a smooth functional, its subdifferential $\pd \phi$ is single-valued and equals its Fr\'echet differential $\pd \phi(h)=[\sinh(h_{xx})]_{xx}.$ We give another simple proof for the exponential decay to $0$ of the global classical solution $h$.

On one hand, it is easy to obtain the energy dissipation
\begin{equation}\label{relation1}
\frac{\ud \phi}{\ud t} = \int \pd \phi(h) h_t \ud x = \int - |\pd \phi(h)|^2 \ud x=: -D.
\end{equation}

On the other hand, we use the $\lambda$-convexity of $\phi$ to establish the connection between $D$ and $\phi$. 
First, from the $\lambda$-convexity \eqref{lam-c} with some $\lambda>0$, we know $\phi(h)-\frac{\lambda}{2}\|h\|^2$ is convex. Thus we have
\begin{equation}
\phi(h(t))-\frac{\lambda}{2}h^2(t) - \phi(v) + \frac{\lambda}{2} v^2 \leq \la \pd \phi (h(t))-\lambda h(t), h(t)-v \ra 
\end{equation}
for any $v\in L^2.$
Then we obtain
\begin{equation}\label{relation2}
\phi(h(t))-\phi(v) \leq -\frac{\lambda}{2} \|h(t)-v\|^2 + \la \pd \phi(h(t)), h(t)-v \ra
\end{equation}
for any $v\in L^2.$ Using Young's inequality, we have
\begin{equation}
 \la \pd \phi(h(t)), h(t)-v \ra \leq \frac{\lambda}{2} \|h(t)-v\|^2 + \frac{1}{2\lambda} \|\pd \phi(h(t))\|^2.
\end{equation}
From this, the estimate \eqref{relation2} becomes
\begin{equation}\label{relation3}
\phi(h(t))-\phi(v) \leq -\frac{\lambda}{2} \|h(t)-v\|^2 + \la \pd \phi(h(t)), h(t)-v \ra\leq  \frac{1}{2\lambda} \|\pd \phi(h(t))\|^2.
\end{equation}
Combining \eqref{relation3} with \eqref{relation1}, we obtain
\begin{equation}
\frac{\ud}{\ud t} (\phi(h(t))-\inf \phi) = -\|\pd \phi(h(t))\|^2 \leq -2\lambda(\phi(h(t))-\inf \phi),
\end{equation}
which gives the exponential decay to $\inf \phi = \phi (h^*)$ with $h^*=0$, i.e.
\begin{equation}
\frac{\lambda}{2}\|h(t)-h^*\|^2\leq\phi(h(t))-\phi(h^*) \leq (\phi(h(0))-\phi(h^*))e^{-2\lambda t}.
\end{equation}

\bibliographystyle{alpha}
\bibliography{bibliogr}

\end{document}